
\documentclass[10pt,reqno]{amsart}
\usepackage{graphicx}
\usepackage{amsfonts}
\usepackage{amssymb}
\usepackage{amsmath}
\usepackage{amsxtra}
\usepackage{latexsym}
\usepackage{epstopdf}
\usepackage{mathrsfs}
\usepackage{mathtools}
\usepackage{esint}
\usepackage{graphicx, caption, subcaption}
\usepackage{enumitem}

\newtheorem{theorem}{Theorem}[section]
\newtheorem{lemma}[theorem]{Lemma}

\newtheorem{algorithm}{Algorithm}

\theoremstyle{definition}

\newtheorem{example}{Example}[section]

\theoremstyle{remark}
\newtheorem{remark}{Remark}[section]

\numberwithin{equation}{section}

\topmargin  -1pt

\begin{document}

\title[Stochastic heavy-ball method ]{Convergence analysis of a stochastic 
heavy-ball method for linear ill-posed problems} 

\author{Qinian Jin}
\address{Mathematical Sciences Institute, Australian National
University, Canberra, ACT 2601, Australia}
\email{qinian.jin@anu.edu.au} \curraddr{}

\author{Yanjun Liu}
\address{Mathematical Sciences Institute, Australian National
University, Canberra, ACT 2601, Australia}
\curraddr{Department of Operations Research and Financial Engineering,
Princeton University, Princeton, NJ 08544, USA}
\email{yanjun.liu@princeton.edu}




\begin{abstract}
In this paper we consider a stochastic heavy-ball method for solving linear ill-posed inverse problems. 
With suitable choices of the step-sizes and the momentum coefficients, we establish the regularization 
property of the method under {\it a priori} selection of the stopping index and derive the rate of 
convergence under a benchmark source condition on the sought solution. Numerical results are provided to 
test the performance of the method. 
\end{abstract}

\keywords{Linear ill-posed problems,  stochastic heavy ball method, convergence, rate of convergence}
%
%

\def\d{\delta}
\def\E{\mathbb{E}}
\def\P{\mathbb{P}}
\def\X{\mathcal{X}}
\def\A{\mathcal{A}}
\def\Y{\mathcal{Y}}
\def\l{\langle}
\def\r{\rangle}
\def\by{\overline{y}^{(n)}}
\def\bY{\overline{y}_n}
\def\la{\lambda}
\def\EE{{\mathbb E}}
\def\RR{{\mathbb R}}
\def\a{\alpha}
\def\l{\langle}
\def\r{\rangle}
\def\p{\partial}
\def\ep{\varepsilon}
\def\N{\mathcal N}
\def\F{\mathcal F}
\def\R{\mathcal R}
\def\prox{\mbox{prox}}

%
%

\newcommand{\ylnote}[1]{\textbf{{\color{blue}{Yanjun:}   #1 }}}

\maketitle

\section{\bf Introduction}

In this paper we consider linear ill-posed inverse problems governed by the system  
\begin{align}\label{sys}
A_i x = y_i, \quad i = 1, \cdots, p
\end{align}
consisting of $p$ linear equations, where, for each $i = 1, \cdots, p$,  $A_i: X \to Y_i$ 
is a bounded linear operator between two real Hilbert spaces $X$ and $Y_i$. Such inverse problems
arise in many practical applications, including, for instance, 

\begin{enumerate}[leftmargin = 0.8cm]
\item[$\bullet$] linear inverse problems with discrete data; see \cite{BDP1985}.

\item[$\bullet$] reconstruction problems arising in various tomography imaging, such as computed tomography and photoacoustic/thermoacoustic tomography, see \cite{BBGHP2007,JW2013,N2001}.


\item[$\bullet$] discretizing infinite-dimensional linear inverse problems by projection 
or collocation methods leads to large scale linear system of the form (\ref{sys}); 
see \cite{EHN1996}.
\end{enumerate}

Throughout the paper we assume (\ref{sys}) has a solution. Since (\ref{sys}) may have 
many solutions, we will determine the $x_0$-minimal norm solution $x^\dag$, i.e. the 
solution with the shortest distance to $x_0$, where $x_0\in X$ is an initial guess. Let 
$Y:= Y_1 \times \cdots \times Y_p$ be the product space of $Y_1, \cdots, Y_p$ with the 
natural inner product inherited from those of $Y_i$. By setting $y := (y_1, \cdots, y_p)$ 
and introducing the bounded linear operator $A: X \to Y$ by 
$$
A x := (A_1 x, \cdots, A_p x), \quad x \in X,
$$
we may rewrite (\ref{sys}) equivalently as 
\begin{align}\label{sys2}
A x = y. 
\end{align}
In practical applications, data are usually acquired by measurements. Therefore, instead of 
the exact data $y = (y_1, \cdots, y_p)$, we have only noisy data $y^\d:= (y_1^\d, \cdots, y_p^\d)$ 
satisfying 
\begin{align}\label{noise}
\|y_i^\d - y_i\| \le \d_i, \quad i = 1, \cdots, p, 
\end{align}
where $\d_i>0$ denotes the noise level corresponding to the data in the space $Y_i$. Let 
$$
\d := \sqrt{\d_1^2 + \cdots +\d_p^2}
$$
which denotes the total noise level of the noisy data. It is thus important to design algorithm 
to produce stable approximation of $x^\dag$ using only the noisy data $y^\d$. Due to the 
ill-posedness of the underlying problems, regularization techniques are required to achieve 
the goal. Many regularization methods have been developed to solve ill-posed inverse problems; 
see \cite{EHN1996} and the references therein.

The most prominent iterative regularization method for solving (\ref{sys2}) is the Landweber iteration 
\begin{align}\label{LW}
x_{n+1}^\d = x_n^\d - \eta A^*(A x_n^\d - y^\d)
\end{align}
which can be viewed as the gradient method with constant step size $\eta>0$ applied to the quadratic function 
\begin{align}\label{quad}
f(x) = \frac{1}{2} \|A x - y^\d\|^2, \quad x \in X, 
\end{align}
where $A^*: Y \to X$ denotes the adjoint of $A$ given by 
$$
A^* v = \sum_{i=1}^p A_i^* v_i, \quad \forall v :=(v_1, \cdots, v_p) \in Y
$$
in terms of the adjoint $A_i^*$ of $A_i$ for all $i = 1, \cdots, p$. The regularization property of 
Landweber iteration has been carried out in detail in the literature, see \cite{EHN1996}. It is known 
that Landweber iteration is a slowly convergent method. 

In order to accelerate the Landweber iteration, multiple-step iterations have been 
considered to modify the method. In particular, based on using orthogonal polynomials, 
the two-step methods of the form 
\begin{align}\label{twostep}
x_{n+1}^\d = x_n^\d - \a_n \eta A^*(A x_n^\d - y^\d) + \beta_n (x_n^\d - x_{n-1}^\d) 
\end{align}
have been investigated in \cite{H1991} for $0<\eta<1/\|A\|^2$ and suitable choices of $\a_n$ and $\beta_n$. 
On the other hand, to determine a minimizer of the minimization problem 
\begin{align}\label{MP}
\min_{x\in X} f(x)
\end{align}
with a continuous differentiable objective function $f$, the heavy ball method 
\begin{align}\label{HB0}
x_{n+1} = x_n - \a_n \eta \nabla f(x_n) + \beta_n (x_n - x_{n-1})
\end{align}
has been proposed in \cite{P1964} to speed up the usual gradient descent method. The 
performance of (\ref{HB0}) depends crucially on the property of $f$ and the choices of 
$\a_n$ and $\beta_n$, and its analysis turns out to be very challenging. When $f$ is 
twice continuous differentiable, strongly convex and has Lipschitz continuous gradient, 
it has been demonstrated in \cite{P1964}, under suitable constant choices of $\a_n$ and 
$\beta_n$, that the iterative sequence $\{x_n\}$ enjoys a provable linear convergence 
faster than the gradient descent.  In the recent paper \cite{GFJ2015}, for merely convex 
objective functions, a convergence rate has been derived in terms of the objective 
function value under the choices $\a_n = 1/(n+2)$ and $\beta_n = n/(n+2)$ with a suitable 
small constant $\eta>0$. Note that, for the quadratic function $f$ given in (\ref{quad}), 
the method (\ref{HB0}) becomes the method (\ref{twostep}) and thus (\ref{twostep}) is a special 
case of (\ref{HB0}). However, the convergence results obtained in \cite{GFJ2015,P1964} are not applicable 
straightforwardly to (\ref{twostep}) when used to solve ill-posed inverse problems. Indeed, if the results 
in \cite{GFJ2015,P1964} were applicable, one would obtain a solution of the linear least square problem 
\begin{align}\label{LS}
\min_{x \in X} \|A x - y^\d\|^2
\end{align}
that generally may not have a solution unless $y^\d \in \mbox{Ran}(A) \oplus \mbox{Ran}(A)^\perp$, see 
\cite{EHN1996}, which is rarely satisfied because $y^\d$ contains irregular noise and $\mbox{Ran}(A)$ 
is non-closed. Even if (\ref{LS}) has a solution, this solution could be far away from the soluton of 
(\ref{sys2}) because of the ill-posedness. 

Note that the implementation of the Landweber iteration (\ref{LW}) and the heavy ball method 
(\ref{twostep}) requires to calculate 
\begin{align}\label{SL}
A^*(A x_n^\d - y^\d) = \sum_{i=1}^p A_i^* (A_i x_n^\d - y_i^\d)
\end{align}
at each iteration step. In case $p$ is huge, this calculation can be time-consuming. In order 
to reduce the computational load per iteration, we may consider using one random term selected 
from the right hand side of (\ref{SL}) to carry out the iteration step. Applying this 
randomization strategy to the Landweber iteration (\ref{LW}) leads to the stochastic gradient 
descent method for solving ill-posed problems, see \cite{JL2019,JLZ2023,LM2022} for instance. 
Applying the same randomization strategy to (\ref{twostep}) gives the following stochastic
heavy-ball method
\begin{align}\label{SHBM}
x_{n+1}^\d = x_n^\d - \a_n \eta_{i_n} A_{i_n}^* (A_{i_n} x_n^\d - y_{i_n}^\d) + \beta_n (x_n^\d - x_{n-1}^\d)
\end{align}
we will consider in this paper, where $x_{-1}^\d = x_0^\d := x_0\in X$ is the initial guess, 
$i_n \in \{1, \cdots, p\}$ is randomly chosen via the uniform distribution, and $\eta_1, \cdots, \eta_p$ 
are preassigned step sizes. This method differs from the stochastic gradient descent method in that 
a momentum term $\beta_n (x_n^\d - x_{n-1}^\d)$ is added to define iterates. The appearance of this 
momentum term brings challenging issues for analyzing the method. However, suitable choices of the 
step-size coefficient $\a_n$ and the momentum coefficient $\beta_n$ may accelerate the convergence 
speed. How to choose $\a_n$ and $\beta_n$ is an important question. In this paper we will consider the 
method (\ref{SHBM}) with $\a_n$ and $\beta_n$ given by 
\begin{align}\label{ab}
\a_n = \frac{1}{n+2}  \quad \mbox{and} \quad \beta_n = \frac{n}{n+2}
\end{align}
and investigate the convergence behavior of the iterates. The stochastic version of the heavy ball 
method (\ref{HB0}) for the general minimization problems (\ref{MP}), where the objective $f$ is 
a finite sum of convex functions, has been considered in \cite{SGD2021}. The convergence result 
in \cite{SGD2021} however is not applicable to (\ref{SHBM}) when used to solve ill-posed problems 
because of the same reason explained above. Therefore, new analysis should be developed to understand 
the stochastic heavy ball method for ill-posed problems. Like all the other iterative regularization 
methods, when (\ref{SHBM}) is used to solve ill-posed inverse problems, it exhibits the semi-convergence 
phenomenon, i.e. the iterate tends to the sought solution at the beginning, and, after a critical 
number of iterations, the iterate leaves away from the sought solution as the iteration proceeds. 
Thus, properly terminating the iteration is crucial for producing acceptable approximate 
solutions. One may hope to determine a stopping 
index $n_\d$ such that $\|x_{n_\d}^\d - x^\dag\|$ to be as small as possible and 
$\|x_{n_\d}^\d - x^\dag\| \to 0$ as $\d \to 0$. In this paper, by establishing 
a stability estimate and exploring the structure of the method, we propose {\it a priori} 
stopping rules to terminate the method (\ref{SHBM}). When the sought solution $x^\dag$ 
satisfies a benchmark source condition, we derive the estimate on $\EE[\|x_n^\d - 
x^\dag\|^2]$ which enables us to derive a converge rate when the stopping index is properly 
chosen. Furthermore, we extend the method (\ref{SHBM}) to determine solutions of ill-posed 
problems in Banach spaces with special features other than the minimal norm property. 
To the best of our knowledge, our paper is the first study on the stochastic heavy ball 
method as an iterative regularization method in the context of ill-posed inverse problems. 
We hope our work can stimulate interest on this topic which contains many interesting 
and challenging questions that deserve further study. 

This paper is organized as follows. In Section 2 we first establish a stability estimate,
and, when the sought solution satisfies a benchmark source condition, we derive a 
convergence rate result, we then use a density argument to demonstrate the regularization 
property of our method under an {\it a priori} stopping rule without relying on any 
source conditions. In Section \ref{sect3}, we extend the method to cover ill-posed 
inverse problems in Banach spaces with the ability of capturing special feature of sought 
solutions. Finally, in Section \ref{sect4}, we report extensive numerical results to 
illustrate the performance of the method. 

\section{\bf Convergence analysis}\label{sect2}

In this section we will provide a convergence analysis on the method (\ref{SHBM}) with 
$\{\a_n\}$ and $\{\beta_n\}$ given by (\ref{ab}) under {\it a priori} stopping rules. 
For convenience of references, we reformulate the method as the following algorithm.

\begin{algorithm}\label{alg:SHB}
Take an initial guess $x_0 \in X$, choose suitable positive numbers $\eta_i$, $i = 1, \cdots, p$, 
and set $x_0^\d := x_0$. For $n \ge 0$ do the following: 

\begin{enumerate}[leftmargin = 0.8cm]
\item[\emph{(i)}] Pick an index $i_n \in \{1, \cdots, p\}$ randomly via the uniform distribution;

\item[\emph{(ii)}] Update $x_{n+1}^\d$ by (\ref{SHBM}) with $\a_n$ and $\beta_n$ given in (\ref{ab}). 
\end{enumerate}

\end{algorithm}

Note that, once $x_0\in X$ and $\eta_i$, $i=1, \cdots, p$, are fixed, the sequence 
$\{x_n^\d\}$ is completely determined by the sample path $\{i_0, i_1, \cdots\}$; changing 
the sample path can result in a different iterative sequence and thus $\{x_n^\d\}$ is a
random sequence. Let $\F_0 = \emptyset$ and, for each integer $n \ge 1$, let $\F_n$ denote 
the $\sigma$-algebra generated by the random variables $i_k$ for $0\le k < n$. Then 
$\{\F_n: n\ge 0\}$ form a filtration which is natural to Algorithm \ref{alg:SHB}. Let $\EE$ 
denote the expectation associated with this filtration, see \cite{B2020}. The tower property 
$$
\EE[\EE[\phi|\F_n]] = \EE[\phi] \quad \mbox{ for any random variable } \phi
$$
will be frequently used. 

In order to carry out the convergence analysis, we adopt the iterate moving-average viewpoint to
rewrite the method (\ref{SHBM}) and (\ref{ab}) equivalently as 
\begin{align}\label{IMA}
\begin{split}
z_{n+1}^\d & = z_n^\d - \eta_{i_n} A_{i_n}^* (A_{i_n} x_n^\d - y_{i_n}^\d), \\
x_{n+1}^\d & = \frac{n+1}{n+2} x_n^\d + \frac{1}{n+2} z_{n+1}^\d
\end{split}
\end{align}
with $z_0^\d = x_0^\d = x_0$. Furthermore, we also need to consider the sequence 
$\{x_n\}$ defined by Algorithm \ref{alg:SHB} using exact data $y$ in place of the noisy 
data $y^\d$, i.e. 
\begin{align}\label{SHBM-exact}
x_{n+1} = x_n - \a_n \eta_{i_n} A_{i_n}^*(A_{i_n} x_n - y_{i_n}) + \beta_n (x_n - x_{n-1})
\end{align}
with $\a_n$ and $\beta_n$ given by (\ref{ab}) and $x_{-1} = x_0$.  It is easy to see that 
(\ref{SHBM-exact}) can be equivalently stated as   
\begin{align}\label{IMA0}
\begin{split}
z_{n+1} & = z_n - \eta_{i_n} A_{i_n}^* (A_{i_n} x_n - y_{i_n}), \\
x_{n+1} & = \frac{n+1}{n+2} x_n + \frac{1}{n+2} z_{n+1}
\end{split}
\end{align}
with $z_0 = x_0$. In the following analysis we will frequently use the polarization 
identity which states that in any real Hilbert space there holds 
\begin{align}\label{polar}
\|u - v \|^2 = \|u\|^2 + \|v\|^2 - 2\l u, v\r, \quad \forall u, v.     
\end{align}
In particular there holds 
\begin{align}\label{polar2}
2\l u - v, u\r \ge \|u\|^2 - \|v\|^2, \quad \forall u, v. 
\end{align}

\subsection{\bf Stability estimate}

According to the definition of $x_n^\d$ and $x_n$, one can easily see that, for any fixed 
integer $n \ge 0$, there holds $\|x_n^\d - x_n\| \to 0$ as $\d \to 0$ along any sample 
path and thus $\EE[\|x_n^\d - x_n\|^2] \to 0$ as $\d \to 0$. The following result gives 
a quantitative estimate of this kind of stability.  

\begin{lemma}\label{IMA.lem1.1}
Consider the sequences $\{x_n^\d\}$ and $\{x_n\}$ defined by Algorithm \ref{alg:SHB} using noisy 
data and exact data respectively. Assume that 
$0<\eta_i <1/\|A_i\|^2$ for $i = 1, \cdots, p$. Then for all integers $n \ge 0$ there holds 
\begin{align}\label{SHB4}
\EE\left[\|x_n^\d - x_n\|^2\right] \le \frac{\bar \eta}{c_0 p} n \d^2, 
\end{align}
where $\bar \eta:= \max\{\eta_i: i = 1, \cdots, p\}$ and $c_0:= \min\{1-\eta_i \|A_i\|^2: i = 1, \cdots, p\}$. 
\end{lemma}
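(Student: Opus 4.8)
The plan is to set $e_n := x_n^\d - x_n$ and $w_n := z_n^\d - z_n$ and track the pair $(e_n, w_n)$ through the iterate moving-average formulations \eqref{IMA} and \eqref{IMA0}. Subtracting \eqref{IMA0} from \eqref{IMA} gives the coupled recursion
\begin{align*}
w_{n+1} & = w_n - \eta_{i_n} A_{i_n}^* A_{i_n} e_n + \eta_{i_n} A_{i_n}^*(y_{i_n}^\d - y_{i_n}), \\
e_{n+1} & = \frac{n+1}{n+2} e_n + \frac{1}{n+2} w_{n+1},
\end{align*}
with $e_0 = w_0 = 0$. The first step is to expand $\|w_{n+1}\|^2$. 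The key algebraic point is that the ``signal'' part $w_n - \eta_{i_n} A_{i_n}^* A_{i_n} e_n$ can be handled using the contraction property $0 < \eta_i < 1/\|A_i\|^2$: since $\|\eta_{i_n}^{1/2} A_{i_n} e_n\|^2 \le \eta_{i_n}\|A_{i_n}\|^2 \|e_n\|^2$, the cross term $-2\eta_{i_n}\langle A_{i_n}^* A_{i_n} e_n, w_n\rangle$ combined with the appropriate square can be massaged — expanding instead $\|w_{n+1}\|^2$ after writing $w_{n+1} = w_n + \eta_{i_n}^{1/2}(\cdots)$ — into a bound of the form $\|w_{n+1}\|^2 \le \|w_n\|^2 + (\text{noise terms}) + (\text{a nonpositive term involving } \|A_{i_n} e_n\|^2)$. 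The noise enters through $\eta_{i_n} A_{i_n}^*(y_{i_n}^\d - y_{i_n})$, whose norm is at most $\eta_{i_n}^{1/2}\|A_{i_n}\| \cdot \eta_{i_n}^{1/2}\|y_{i_n}^\d - y_{i_n}\| \le \eta_{i_n}^{1/2}\|A_{i_n}\| \delta_{i_n}$ after using \eqref{noise}.

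Next I would take conditional expectation with respect to $\F_n$. Since $i_n$ is uniform on $\{1,\dots,p\}$ and independent of $\F_n$, terms like $\EE[\eta_{i_n}\|A_{i_n} e_n\|^2 | \F_n] = \frac1p\sum_i \eta_i \|A_i e_n\|^2$ and $\EE[\eta_{i_n}\|A_{i_n}\|^2 \delta_{i_n}^2|\F_n] = \frac1p\sum_i \eta_i\|A_i\|^2\delta_i^2 \le \frac{\bar\eta}{p}\sum_i \delta_i^2 = \frac{\bar\eta}{p}\delta^2$ can be averaged out; here the cross term between the signal and the noise, which involves $e_n$ itself, needs to be controlled by Cauchy–Schwarz and absorbed. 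After the tower property this should yield a recursion of the shape
$$
\EE[\|w_{n+1}\|^2] \le \EE[\|w_n\|^2] + \frac{\bar\eta}{p}\delta^2 - (\text{good term}),
$$
possibly after combining with a companion estimate for $\EE[\|e_n\|^2]$ via the averaging identity $e_{n+1} = \frac{n+1}{n+2}e_n + \frac1{n+2}w_{n+1}$, which by convexity gives $\|e_{n+1}\|^2 \le \frac{n+1}{n+2}\|e_n\|^2 + \frac1{n+2}\|w_{n+1}\|^2$. The cleanest route may be to prove $\EE[\|w_n\|^2] \le \frac{\bar\eta}{c_0 p} n\delta^2$ first by induction — the constant $c_0 = \min_i(1 - \eta_i\|A_i\|^2)$ should emerge precisely from the coefficient left over when the contraction is used to dominate the signal terms — and then deduce the bound for $\EE[\|e_n\|^2]$ from the convexity inequality, noting that a convex combination of quantities each bounded by $\frac{\bar\eta}{c_0 p} n\delta^2$ (monotone in $n$) stays below $\frac{\bar\eta}{c_0 p}n\delta^2$.

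The main obstacle I anticipate is the handling of the cross term $-2\eta_{i_n}\langle A_{i_n}^* A_{i_n} e_n, w_n\rangle$ together with the noise cross term: naive Cauchy–Schwarz produces a factor $\|e_n\|\|w_n\|$ that is not obviously controllable without already knowing both are small, so one has to be careful to split the momentum/signal interaction so that the $-(1-\eta_i\|A_i\|^2)$ factor is genuinely available to soak up the bad terms, and to keep track of how $e_n$ relates to the running average of the $w_k$'s. Getting the constant to be exactly $\bar\eta/(c_0 p)$ rather than something larger will require doing this split tightly — in particular, bounding $\eta_{i_n}\|A_{i_n}\|\delta_{i_n} \le \eta_{i_n}^{1/2}\delta_{i_n}\cdot(\eta_{i_n}\|A_{i_n}\|^2)^{1/2}$ and using $\eta_i\|A_i\|^2 = 1 - (1-\eta_i\|A_i\|^2) \le 1 - c_0 < 1$ at the right moment — and checking that the induction closes with no accumulated lower-order error.
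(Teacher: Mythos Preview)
Your overall architecture matches the paper's: work with $e_n = x_n^\d - x_n$ and $w_n = z_n^\d - z_n$ via the moving-average formulations, prove the bound $\EE[\|w_n\|^2] \le \frac{\bar\eta}{c_0 p}n\delta^2$ first, and then deduce the bound for $e_n$ from convexity of $\|\cdot\|^2$. That last step is exactly how the paper finishes.

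The gap is precisely the obstacle you flag yourself. The cross term $-2\eta_{i_n}\langle w_n, A_{i_n}^*A_{i_n} e_n\rangle$ cannot be handled by Cauchy--Schwarz: that produces a factor $\|e_n\|\|w_n\|$ (or $\|A_{i_n}e_n\|\|A_{i_n}w_n\|$), and there is no separate control on $\|e_n\|$ available at this stage to close the induction. The constant $c_0$ will not ``emerge'' from a contraction argument of the form you sketch, because the operator $I - \eta_i A_i^*A_i$ acts on $e_n$, not on $w_n$, and the two are different objects. What you are missing is the structural identity that comes directly from the second line of the moving-average recursion:
\[
w_n - e_n = n(e_n - e_{n-1}).
\]
With this, the cross term splits as
\[
-2\eta_{i_n}\langle A_{i_n}e_n, A_{i_n}e_n - (y_{i_n}^\d - y_{i_n})\rangle
- 2n\eta_{i_n}\langle A_{i_n}(e_n - e_{n-1}), A_{i_n}e_n - (y_{i_n}^\d - y_{i_n})\rangle,
\]
and the polarization identity turns the second piece into $-n\eta_{i_n}\big(\|A_{i_n}e_n - (y_{i_n}^\d - y_{i_n})\|^2 - \|A_{i_n}e_{n-1} - (y_{i_n}^\d - y_{i_n})\|^2\big)$ plus a nonpositive remainder. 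After taking expectation this creates a \emph{telescoping} inequality for the combined quantity
\[
\EE\Big[\|w_n\|^2 + \tfrac{n}{p}\sum_{i=1}^p \eta_i \|A_i e_{n-1} - (y_i^\d - y_i)\|^2\Big],
\]
with increment $\frac{1}{c_0 p}\sum_i \eta_i \delta_i^2 \le \frac{\bar\eta}{c_0 p}\delta^2$ (the $c_0$ appears when you use $n+2-\eta_i\|A_i\|^2 \ge n+1+c_0$ and absorb the noise cross term via Cauchy--Schwarz against the $c_0$-piece). Summing the telescoping inequality gives the bound on $\EE[\|w_n\|^2]$ directly, without a separate induction hypothesis on $e_n$. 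This device---carrying the auxiliary term $\frac{n}{p}\sum_i \eta_i\|A_i e_{n-1}-(y_i^\d-y_i)\|^2$ alongside $\|w_n\|^2$---is the idea your proposal lacks, and without it the argument does not go through.
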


\begin{proof}
By using the equivalent definition of $\{x_n^\d\}$ and $\{x_n\}$ given by (\ref{IMA}) and (\ref{IMA0}) 
respectively, we first show that 
\begin{align}\label{SHB14}
\EE\left[\|z_n^\d - z_n\|^2\right] \le \frac{\bar \eta}{c_0 p} n \d^2 
\end{align}
for all integers $n \ge 0$. Since $z_0^\d = z_0 = x_0$, it is trivial for $n =0$. We now derive 
(\ref{SHB14}) for all $n \ge 1$. From (\ref{IMA}) and (\ref{IMA0}) it follows that along any 
sample path there hold 
\begin{align}\label{IMA.1}
\begin{split}
z_{n+1}^\d - z_{n+1} & = z_n^\d - z_n - \eta_{i_n} A_{i_n}^* (A_{i_n}(x_n^\d - x_n) - y_{i_n}^\d + y_{i_n}), \\
x_{n+1}^\d - x_{n+1} & = \frac{n+1}{n+2} (x_n^\d - x_n) + \frac{1}{n+2} (z_{n+1}^\d - z_{n+1}).
\end{split}
\end{align}
Therefore, by the polarization identity (\ref{polar}) we have 
\begin{align*}
\|z_{n+1}^\d - z_{n+1}\|^2 
& = \|z_n^\d - z_n - \eta_{i_n} A_{i_n}^* (A_{i_n}(x_n^\d - x_n) - y_{i_n}^\d + y_{i_n})\|^2 \displaybreak[0]\\
& = \|z_n^\d - z_n\|^2 - 2\eta_{i_n} \l z_n^\d - z_n, A_{i_n}^* (A_{i_n} (x_n^\d - x_n) - y_{i_n}^\d + y_{i_n})\r \displaybreak[0]\\
& \quad \, + \eta_{i_n}^2 \|A_{i_n}^*(A_{i_n}(x_n^\d - x_n) - y_{i_n}^\d + y_{i_n})\|^2 \displaybreak[0]\\
& \le \|z_n^\d - z_n\|^2 - 2\eta_{i_n} \l A_{i_n}(z_n^\d - z_n), A_{i_n} (x_n^\d - x_n) - y_{i_n}^\d + y_{i_n}\r \displaybreak[0]\\
& \quad \, + \eta_{i_n}^2 \|A_{i_n}\|^2 \|A_{i_n}(x_n^\d - x_n) - y_{i_n}^\d + y_{i_n}\|^2 \displaybreak[0]\\
& = \|z_n^\d - z_n\|^2 - 2\eta_{i_n} \l A_{i_n}(x_n^\d - x_n), A_{i_n} (x_n^\d - x_n) - y_{i_n}^\d + y_{i_n}\r \displaybreak[0]\\
& \quad \, - 2\eta_{i_n} \l A_{i_n}((z_n^\d - z_n) - (x_n^\d - x_n)), A_{i_n} (x_n^\d - x_n) - y_{i_n}^\d + y_{i_n}\r \displaybreak[0]\\
& \quad \, + \eta_{i_n}^2 \|A_{i_n}\|^2 \|A_{i_n}(x_n^\d - x_n) - y_{i_n}^\d + y_{i_n}\|^2.
\end{align*}
According to the second equation in (\ref{IMA.1}) we have
\begin{align*}
(z_n^\d - z_n) - (x_n^\d - x_n) = n \left((x_n^\d - x_n) - (x_{n-1}^\d - x_{n-1})\right). 
\end{align*}
Thus
\begin{align*}
& \|z_{n+1}^\d  - z_{n+1}\|^2 \\
& \le \|z_n^\d - z_n\|^2 - 2\eta_{i_n} \|A_{i_n} (x_n^\d - x_n) - y_{i_n}^\d + y_{i_n}\|^2 \displaybreak[0]\\
& \quad \, - 2\eta_{i_n} \l y_{i_n}^\d - y_{i_n}, A_{i_n} (x_n^\d - x_n) - y_{i_n}^\d + y_{i_n}\r \displaybreak[0]\\
& \quad \, - 2n\eta_{i_n}\l A_{i_n}((x_n^\d - x_n) - (x_{n-1}^\d - x_{n-1})), A_{i_n} (x_n^\d - x_n)-y_{i_n}^\d + y_{i_n}\r \displaybreak[0]\\
& \quad \, + \eta_{i_n}^2 \|A_{i_n}\|^2 \|A_{i_n}(x_n^\d - x_n) - y_{i_n}^\d + y_{i_n}\|^2.
\end{align*}
By virtue of (\ref{noise}) and the polarization identity (\ref{polar}), we have 
\begin{align*}
& \|z_{n+1}^\d - z_{n+1}\|^2 \\
& \le \|z_n^\d - z_n\|^2 - 2\eta_{i_n} \|A_{i_n} (x_n^\d - x_n) - y_{i_n}^\d + y_{i_n}\|^2 \displaybreak[0]\\
& \quad \, + 2\eta_{i_n} \d_{i_n} \|A_{i_n} (x_n^\d - x_n) - y_{i_n}^\d + y_{i_n}\| \displaybreak[0]\\
& \quad \, - n \eta_{i_n} \left(\|A_{i_n}(x_n^\d - x_n) -y_{i_n}^\d + y_{i_n}\|^2 
- \|A_{i_n} (x_{n-1}^\d - x_{n-1}) -y_{i_n}^\d +y_{i_n}\|^2\right) \displaybreak[0]\\
& \quad \, - n \eta_{i_n} \|A_{i_n}((x_n^\d - x_n) - (x_{n-1}^\d - x_{n-1}))\|^2 \displaybreak[0]\\
& \quad \, + \eta_{i_n}^2 \|A_{i_n}\|^2 \|A_{i_n}(x_n^\d - x_n) - y_{i_n}^\d + y_{i_n}\|^2 \displaybreak[0]\\
& \le \|z_n^\d - z_n\|^2 - (n + 2 - \eta_{i_n} \|A_{i_n}\|^2) \eta_{i_n} \|A_{i_n} (x_n^\d - x_n) - y_{i_n}^\d + y_{i_n}\|^2 \displaybreak[0]\\
& \quad \, + n \eta_{i_n}\|A_{i_n}(x_{n-1}^\d - x_{n-1}) -y_{i_n}^\d +y_{i_n}\|^2 \displaybreak[0]\\
& \quad \, + 2 \eta_{i_n} \d_{i_n} \|A_{i_n} (x_n^\d - x_n) - y_{i_n}^\d + y_{i_n}\|. 
\end{align*}
Taking the conditional expectation on $\F_n$ gives 
\begin{align*}
\EE[\|z_{n+1}^\d - z_{n+1}\|^2|\F_n] 
& \le \|z_n^\d - z_n\|^2  + \frac{n}{p}\sum_{i=1}^p \eta_i \|A_i(x_{n-1}^\d - x_{n-1}) -y_i^\d +y_i\|^2 \displaybreak[0]\\
& \quad \, - \frac{1}{p} \sum_{i=1}^p (n+2-\eta_i \|A_i\|^2) \eta_i \|A_i (x_n^\d - x_n) - y_i^\d + y_i\|^2 \displaybreak[0]\\
& \quad \, + \frac{2}{p} \sum_{i=1}^p \eta_i \d_i \|A_i (x_n^\d - x_n) - y_i^\d + y_i\|. 
\end{align*}
By using the tower property of expectation and noting that $n+2 - \eta_i \|A_i\|^2 \ge n+1+ c_0$
for $i=1, \cdots, p$, we then obtain  
\begin{align*}
\EE[\|z_{n+1}^\d - z_{n+1}\|^2] 
& \le \EE[\|z_n^\d - z_n\|^2]  + \frac{n}{p}\sum_{i=1}^p \eta_i \EE[\|A_i(x_{n-1}^\d - x_{n-1}) -y_i^\d +y_i\|^2] \displaybreak[0]\\
& \quad \, - \frac{n+1 + c_0}{p} \sum_{i=1}^p \eta_i \EE[\|A_i (x_n^\d - x_n) - y_i^\d + y_i\|^2] \displaybreak[0]\\
& \quad \, + \frac{2}{p} \sum_{i=1}^p \eta_i \d_i \EE[\|A_i (x_n^\d - x_n) - y_i^\d + y_i\|]. 
\end{align*}
By the Cauchy-Schwarz inequality we have  
\begin{align*}
\frac{2}{p}\sum_{i=1}^p \eta_i \d_i \EE[\|A_i (x_n^\d - x_n) - y_i^\d + y_i\|]
& \le \frac{2}{p}\sum_{i=1}^p \eta_i \d_i \left(\EE[\|A_i (x_n^\d - x_n) - y_i^\d + y_i\|^2]\right)^{\frac{1}{2}} \displaybreak[0]\\
& \le \frac{c_0}{p} \sum_{i=1}^p \eta_i \EE[\|A_i (x_n^\d - x_n) - y_i^\d + y_i\|^2] \\
& \quad \, + \frac{1}{c_0 p} \sum_{i=1}^p \eta_i \d_i^2.
\end{align*}
Consequently
\begin{align*}
& \EE\left[\|z_{n+1}^\d - z_{n+1}\|^2 + \frac{n+1}{p} \sum_{i=1}^p \eta_i \|A_i (x_n^\d - x_n) - y_i^\d + y_i\|^2\right] \displaybreak[0]\\
& \le \EE\left[\|z_n^\d - z_n\|^2 + \frac{n}{p}\sum_{i=1}^p \eta_i \|A_i(x_{n-1}^\d - x_{n-1}) -y_i^\d +y_i\|^2\right] 
+ \frac{1}{c_0 p} \sum_{i=1}^p \eta_i \d_i^2
\end{align*}
for all integers $n\ge 0$. Since $z_0^\d = z_0$, by recursively using this inequality we thus obtain 
\begin{align*}
\EE\left[\|z_{n+1}^\d - z_{n+1}\|^2 + \frac{n+1}{p} \sum_{i=1}^p \eta_i \|A_i (x_n^\d - x_n) - y_i^\d + y_i\|^2\right] 
& \le  \frac{n+1}{c_0 p} \sum_{i=1}^p \eta_i \d_i^2 \\
& \le \frac{\bar \eta (n+1) \d^2}{c_0 p}
\end{align*}
for all integers $n \ge 0$ which implies (\ref{SHB14}). 

Next we show (\ref{SHB4}). It is trivial for $n=0$ as $x_0^\d = x_0$. 
Assume it is true for some integer $n \ge 0$. It follows from the second equation in 
(\ref{IMA.1}) and the convexity of the function $x \to \|x\|^2$ that 
$$
\|x_{n+1}^\d - x_{n+1}\|^2 \le \frac{n+1}{n+2} \|x_n^\d - x_n\|^2 
+ \frac{1}{n+2}\|z_{n+1}^\d - z_{n+1}\|^2. 
$$
Therefore, by using the induction hypothesis and (\ref{SHB14}), we can obtain 
$\EE[\|x_{n+1}^\d - x_{n+1}\|^2] \le \frac{\bar \eta (n+1) \d^2}{c_0 p}$. 
This completes the proof of (\ref{SHB4}). 
\end{proof}

\subsection{\bf Rate of convergence}

In this subsection we will focus on deriving an estimate on $\EE[\|x_n^\d - x^\dag\|^2]$ 
when the $x_0$-minimal norm solution $x^\dag$ satisfies the benchmark source condition
\begin{align}\label{SC}
x^\dag - x_0 = A^* \la^\dag
\end{align}
for some $\la^\dag := (\la_1^\dag, \cdots, \la_p^\dag) \in Y_1\times \cdots \times Y_p$.
Based on this result we can make an {\it a priori} choice of the stopping index to 
derive a convergence rate in terms of the noise level $\d$. By noting that 
\begin{align*}
\|x_n^\d - x^\dag\|^2 
& \le (\|x_n^\d - x_n\| + \|x_n - x^\dag\|)^2 \le 2 (\|x_n^\d - x_n\|^2 + \|x_n - x^\dag\|^2), 
\end{align*}
we have 
\begin{align}\label{SHB.231}
\EE[\|x_n^\d - x^\dag\|^2] \le 2 \EE[\|x_n^\d - x_n\|^2] + 2 \EE[\|x_n - x^\dag\|^2]. 
\end{align}
Thus, by virtue of Lemma \ref{IMA.lem1.1}, we only need to estimate $\EE[\|x_n - x^\dag\|^2]$
under the source condition (\ref{SC}). To this end, we make use of the equivalent 
formulation of the method (\ref{IMA0}) as follows 
\begin{align}\label{SHB.E10}
x_n = x_0 + A^* \la_n,
\end{align}
where $\{\la_n\}$ is a sequence in $Y_1 \times \cdots \times Y_p$ defined as follows: 
$\la_{-1} = \la_0 =0$ and when $\la_n =(\la_{n,1}, \cdots, \la_{n, p})$ is defined, we calculate 
$x_n$ by (\ref{SHB.E10}) and define 
$\la_{n+1} :=(\la_{n+1,1}, \cdots, \la_{n+1, p})$ by 
\begin{align}\label{SHB10}
\la_{n+1, i} = \left\{\begin{array}{lll}
\la_{n, i} + \beta_n (\la_{n, i} - \la_{n-1, i}) & \mbox{if } i \ne i_n,\\[1ex]
\la_{n, i_n} + \beta_n (\la_{n, i_n} - \la_{n-1, i_n})
- \a_n \eta_{i_n}(A_{i_n} x_n - y_{i_n}) & \mbox{if } i = i_n. 
\end{array}\right.
\end{align}
One can easily check that this defined sequence $\{x_n\}$ coincides with the one defined by 
(\ref{IMA0}). We have the following result.

\begin{lemma}\label{SHBM:lem1}
Consider the sequence $\{x_n\}$ defined by Algorithm \ref{alg:SHB} using exact data and let 
$\{\la_n\}$ be defined as in (\ref{SHB10}) with $\la_{-1} = \la_0 =0$. Assume that 
the source condition (\ref{SC}) holds. Let 
$$
u_n := \la_n + n(\la_n - \la_{n-1}) \quad \mbox{and} \quad 
r_n := \sum_{i=1}^n \frac{1}{\eta_i} \|u_{n, i} - \la_i^\dag\|^2
$$
for all integers $n \ge 0$, where $u_{n,i}$ denotes the $i$-th component 
of $u_n$ in $Y_i$. Then
\begin{align*}
\EE\left[r_{n+1} + \frac{n+2}{p} \|x_n - x^\dag\|^2\right] 
& \le \EE\left[r_n + \frac{n}{p} \|x_{n-1} - x^\dag\|^2\right] \\
& \quad \, + \frac{1}{p} \sum_{i=1}^p \eta_i \EE\left[\|A_i x_n - y_i\|^2\right].
\end{align*}
\end{lemma}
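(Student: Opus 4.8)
The plan is to recognise that, although the primal iterate $x_n$ does not satisfy any simple recursion, the combination $u_n = \la_n + n(\la_n - \la_{n-1})$ of dual variables does, and then to run an energy estimate for the weighted dual residual $r_n$. First I would rewrite the second line of (\ref{IMA0}) as $z_{n+1} = (n+2)x_{n+1} - (n+1)x_n$, so that $z_n = (n+1)x_n - n x_{n-1}$ for all $n \ge 0$ (using $x_{-1} = x_0$). Substituting $x_n = x_0 + A^* \la_n$ from (\ref{SHB.E10}) gives $z_n = x_0 + A^* u_n$, and hence, by the source condition (\ref{SC}),
\[
z_n - x^\dag = A^*(u_n - \la^\dag) = \sum_{i=1}^p A_i^*(u_{n,i} - \la_i^\dag).
\]
Next, from the update rule (\ref{SHB10}) with $\a_n = 1/(n+2)$ and $\beta_n = n/(n+2)$, I would compute $u_{n+1,i} = (n+2)\la_{n+1,i} - (n+1)\la_{n,i}$ componentwise; a short calculation shows $u_{n+1,i} = u_{n,i}$ for all $i \ne i_n$, while $u_{n+1,i_n} = u_{n,i_n} - \eta_{i_n}(A_{i_n} x_n - y_{i_n})$. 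Thus $u_n$ evolves exactly like a randomized Kaczmarz / stochastic gradient iterate, updating only a single coordinate per step.

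Since only the $i_n$-th coordinate of $u_n$ changes, the increment $r_{n+1} - r_n$ reduces to a single term; expanding the square and using $A_{i_n} x_n - y_{i_n} = A_{i_n}(x_n - x^\dag)$ (because $A_{i_n} x^\dag = y_{i_n}$) yields
\[
r_{n+1} - r_n = -2\langle A_{i_n}^*(u_{n,i_n} - \la_{i_n}^\dag), x_n - x^\dag\rangle + \eta_{i_n}\|A_{i_n} x_n - y_{i_n}\|^2 .
\]
Taking $\EE[\,\cdot\,|\,\F_n]$, with $i_n$ uniform and independent of $\F_n$, replaces the two $i_n$-indexed quantities by their $\tfrac1p$-averages over $i$, and by the identity of the first paragraph the averaged inner-product term equals $\tfrac1p\langle z_n - x^\dag, x_n - x^\dag\rangle$. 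Finally I would insert $z_n - x^\dag = (n+1)(x_n - x^\dag) - n(x_{n-1} - x^\dag)$ and apply the polarization bound $2\langle x_{n-1} - x^\dag, x_n - x^\dag\rangle \le \|x_{n-1} - x^\dag\|^2 + \|x_n - x^\dag\|^2$ (a form of (\ref{polar})) to obtain $\langle z_n - x^\dag, x_n - x^\dag\rangle \ge \tfrac{n+2}{2}\|x_n - x^\dag\|^2 - \tfrac{n}{2}\|x_{n-1} - x^\dag\|^2$; rearranging and applying the tower property then gives the claimed inequality, with precisely the coefficients $\tfrac{n+2}{p}$ and $\tfrac{n}{p}$. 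The only genuinely non-routine point is the algebraic observation in the first paragraph — that $u_n$ obeys the clean one-coordinate recursion and that $z_n - x^\dag = A^*(u_n - \la^\dag)$; once these are in place, the remainder mirrors the energy bookkeeping already used in the proof of Lemma \ref{IMA.lem1.1}.
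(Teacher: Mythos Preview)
Your proposal is correct and follows essentially the same route as the paper's proof. The paper computes the one-coordinate update $u_{n+1,i_n} = u_{n,i_n} - \eta_{i_n}(A_{i_n}x_n - y_{i_n})$, expands $r_{n+1}-r_n$ via the polarization identity, takes the conditional expectation, and then uses $A^*(u_n - \la^\dag) = x_n - x^\dag + n(x_n - x_{n-1})$ together with (\ref{polar2}); your version phrases the same identity as $z_n - x^\dag = A^*(u_n - \la^\dag)$ with $z_n - x^\dag = (n+1)(x_n-x^\dag) - n(x_{n-1}-x^\dag)$ and uses the equivalent Cauchy--Schwarz form of the polarization bound, but the algebra and the resulting coefficients are identical.
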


\begin{proof}
By the definition of $u_{n+1}$ and the formulae of $\a_n$ and $\beta_n$, it is easy 
to see that 
\begin{align*}
u_{n+1, i} = \left\{\begin{array}{lll}
u_{n, i} & \mbox{ if } i \ne i_n,\\
u_{n, i_n} - \eta_{i_n} (A_{i_n} x_n - y_{i_n}) & \mbox{ if } i = i_n. 
\end{array}\right.
\end{align*}
Thus, by the polarization identity (\ref{polar}) and $A_{i_n} x^\dag = y_{i_n}$ we have  
\begin{align*}
r_{n+1} & = \sum_{i\ne i_n} \frac{1}{\eta_i} \|u_{n+1, i} - \la_i^\dag\|^2
+ \frac{1}{\eta_{i_n}} \|u_{n+1, i_n} - \la_{i_n}^\dag\|^2 \displaybreak[0]\\
& = \sum_{i\ne i_n} \frac{1}{\eta_i} \|u_{n, i} - \la_i^\dag\|^2 
+ \frac{1}{\eta_{i_n}} \|u_{n, i_n} - \la_{i_n}^\dag - \eta_{i_n} (A_{i_n} x_n - y_{i_n})\|^2 \displaybreak[0]\\
& = \sum_{i\ne i_n} \frac{1}{\eta_i} \|u_{n, i} - \la_i^\dag\|^2 
+ \frac{1}{\eta_{i_n}} \|u_{n, i_n} - \la_{i_n}^\dag\|^2 + \eta_{i_n} \|A_{i_n} x_n - y_{i_n}\|^2 \displaybreak[0]\\
& \quad \, - 2 \l u_{n, i_n} - \la_{i_n}^\dag, A_{i_n} (x_n - x^\dag) \r \displaybreak[0]\\
& = r_n + \eta_{i_n} \|A_{i_n} x_n - y_{i_n}\|^2 
- 2 \l A_{i_n}^*(u_{n, i_n} - \la_{i_n}^\dag), x_n - x^\dag\r.
\end{align*}
Taking the conditional expectation on $\F_n$ gives 
\begin{align*}
\EE[r_{n+1}|\F_n]  
& = r_n + \frac{1}{p} \sum_{i=1}^p \eta_i \|A_i x_n - y_i\|^2 
- \frac{2}{p} \EE\left[\left\l \sum_{i=1}^p A_i^*(u_{n, i} - \la_{i}^\dag), x_n - x^\dag\right\r\right] \displaybreak[0]\\
& = r_n + \frac{1}{p} \sum_{i=1}^p \eta_i \|A_i x_n - y_i\|^2 
- \frac{2}{p} \left\l A^*(u_n - \la^\dag), x_n - x^\dag\right\r.
\end{align*}
Consequently    
\begin{align*}
\EE[r_{n+1}] = \EE[\EE[r_{n+1}|\F_n]]  
& = \EE[r_n] + \frac{1}{p} \sum_{i=1}^p \eta_i \EE[\|A_i x_n - y_i\|^2] \\
& \quad \, - \frac{2}{p} \EE\left[\left\l A^*(u_n - \la^\dag), x_n - x^\dag\right\r\right].
\end{align*}
By the definition of $u_n$, (\ref{SC}) and (\ref{SHB.E10}) we have 
$A^* (u_n - \la^\dag) = x_n - x^\dag + n(x_n - x_{n-1})$. Therefore 
\begin{align*}
\EE[r_{n+1}] & = \EE[r_n] + \frac{1}{p} \sum_{i=1}^p \eta_i \EE[\|A_i x_n - y_i\|^2] 
- \frac{2}{p} \EE\left[\|x_n - x^\dag\|^2\right] \displaybreak[0]\\
& \quad \, - \frac{2n}{p} \EE\left[\left\l x_n - x_{n-1}, x_n - x^\dag\right\r\right] \displaybreak[0]\\
& \le \EE[r_n] + \frac{1}{p} \sum_{i=1}^p \eta_i \EE[\|A_i x_n - y_i\|^2] 
- \frac{2}{p} \EE\left[\|x_n - x^\dag\|^2\right] \displaybreak[0]\\
& \quad \, - \frac{n}{p} \EE\left[\|x_n - x^\dag\|^2 - \|x_{n-1} - x^\dag\|^2\right],
\end{align*}
where for the last step we used (\ref{polar2}). Regrouping the terms thus completes the proof.  
\end{proof}

In order to use Lemma \ref{SHBM:lem1} to derive an estimate on $\EE[\|x_n - x^\dag\|^2]$
with $\eta_i$ being as large as possible, we need the following result which enables us to cancel 
out the summation term on the right hand side of the inequality in Lemma \ref{SHBM:lem1}. 

\begin{lemma}\label{SHBM:lem2}
Let $\{x_n\}$ and $\{z_n\}$ be defined by (\ref{IMA0}). Assume $0< \eta_i <1/\|A_i\|^2$ 
for $i = 1, \cdots, p$. Then 
\begin{align*}
& \EE\left[\|z_{n+1} - x^\dag\|^2 + \frac{n+1}{p} \sum_{i=1}^p \eta_i \|A_i x_n - y_i\|^2\right] \displaybreak[0]\\
& \le \EE\left[\|z_n -x^\dag\|^2 + \frac{n}{p} \sum_{i=1}^p \eta_i \|A_i x_{n-1} - y_i\|^2\right]
- \frac{c_0}{p} \sum_{i=1}^p \eta_i \EE[\|A_i x_n - y_i\|^2],
\end{align*}
where $c_0$ is the positive constant defined in Lemma \ref{IMA.lem1.1}.
\end{lemma}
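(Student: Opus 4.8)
The plan is to mimic the computation in the proof of Lemma \ref{IMA.lem1.1}, but with the auxiliary sequence $\{z_n\}$ measured against the exact solution $x^\dag$ rather than against its noisy counterpart. Since $A_i x^\dag = y_i$ for every $i$, no noise terms appear and the argument is in fact cleaner.

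First I would fix a sample path and expand $\|z_{n+1}-x^\dag\|^2$ using the polarization identity (\ref{polar}) and the first line of (\ref{IMA0}):
\[
\|z_{n+1}-x^\dag\|^2 = \|z_n-x^\dag\|^2 - 2\eta_{i_n}\l A_{i_n}(z_n-x^\dag), A_{i_n}x_n - y_{i_n}\r + \eta_{i_n}^2\|A_{i_n}^*(A_{i_n}x_n-y_{i_n})\|^2,
\]
and bound the last term by $\eta_{i_n}^2\|A_{i_n}\|^2\|A_{i_n}x_n-y_{i_n}\|^2$. The crucial manipulation, exactly as in Lemma \ref{IMA.lem1.1}, is to use the identity $z_n-x_n = n(x_n-x_{n-1})$ (which follows from the second line of (\ref{IMA0})) together with $A_{i_n}x^\dag = y_{i_n}$ to split
\[
\l A_{i_n}(z_n-x^\dag), A_{i_n}x_n-y_{i_n}\r = \|A_{i_n}x_n-y_{i_n}\|^2 + n\l A_{i_n}(x_n-x_{n-1}), A_{i_n}x_n-y_{i_n}\r .
\]
Applying (\ref{polar2}) to the remaining inner product (with $u=A_{i_n}x_n-y_{i_n}$ and $v=A_{i_n}x_{n-1}-y_{i_n}$) furnishes a telescoping lower bound, and after collecting terms and invoking $n+2-\eta_i\|A_i\|^2 \ge n+1+c_0$ one arrives at
\[
\|z_{n+1}-x^\dag\|^2 \le \|z_n-x^\dag\|^2 - (n+1+c_0)\eta_{i_n}\|A_{i_n}x_n-y_{i_n}\|^2 + n\eta_{i_n}\|A_{i_n}x_{n-1}-y_{i_n}\|^2
\]
along any sample path.

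Finally I would take the conditional expectation on $\F_n$, using that $i_n$ is uniformly distributed on $\{1,\dots,p\}$ and that $z_n$, $x_n$, $x_{n-1}$ are $\F_n$-measurable, so that the single random index is replaced by the average $\frac1p\sum_{i=1}^p$; the tower property then yields an inequality in full expectations, and regrouping the two summation terms (moving the factor $n+1$ to the left-hand side and leaving $\frac{c_0}{p}\sum_{i=1}^p\eta_i\EE[\|A_ix_n-y_i\|^2]$ as the remainder) gives precisely the asserted inequality. I do not anticipate a real obstacle: the only nonroutine point is recognizing that the moving-average form (\ref{IMA0}) forces the clean identity $z_n-x_n=n(x_n-x_{n-1})$, which turns the cross term into a perfect telescope plus a sign-definite remainder, the same mechanism already used in the proof of Lemma \ref{IMA.lem1.1}. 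The condition $\eta_i<1/\|A_i\|^2$ (hence $c_0>0$) is exactly what makes that remainder strictly favorable.
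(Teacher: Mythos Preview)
Your proposal is correct and follows essentially the same route as the paper: expand $\|z_{n+1}-x^\dag\|^2$ via the polarization identity, use $z_n-x^\dag=(x_n-x^\dag)+n(x_n-x_{n-1})$ together with $A_{i_n}x^\dag=y_{i_n}$, apply (\ref{polar2}) to obtain the telescoping term, bound $n+2-\eta_{i_n}\|A_{i_n}\|^2\ge n+1+c_0$, and then take expectations and regroup. The only cosmetic difference is that the paper applies the bound $n+2-\eta_i\|A_i\|^2\ge n+1+c_0$ after passing to expectation rather than pathwise, which makes no difference to the argument.
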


\begin{proof}
By the definition of $z_{n+1}$ and the polarization identity (\ref{polar}) we have 
\begin{align*}
\|z_{n+1} - x^\dag\|^2
& = \|z_n - x^\dag - \eta_{i_n} A_{i_n}^*(A_{i_n} x_n - y_{i_n})\|^2 \displaybreak[0]\\
&= \|z_n - x^\dag\|^2 - 2\eta_{i_n} \l A_{i_n}^* (A_{i_n} x_n - y_{i_n}), z_n - x^\dag\r \displaybreak[0]\\
&\quad \, + \eta_{i_n}^2 \| A_{i_n}^* (A_{i_n} x_n - y_{i_n})\|^2.
\end{align*}
Note that $z_n = x_n + n (x_n - x_{n-1})$ and $A_{i_n} x^\dag = y_{i_n}$. Therefore, 
we may use (\ref{polar2}) to obtain 
\begin{align*}
& 2 \eta_{i_n} \l A_{i_n}^* (A_{i_n} x_n - y_{i_n}), z_n - x^\dag\r \displaybreak[0]\\
& = 2 \eta_{i_n} \l A_{i_n} x_n - y_{i_n}, A_{i_n} x_n - y_{i_n}\r  
+ 2 n \eta_{i_n} \l A_{i_n} x_n - y_{i_n}, A_{i_n}(x_n - x_{n-1})\r \displaybreak[0]\\
& \ge 2\eta_{i_n} \|A_{i_n} x_n - y_{i_n}\|^2 + n \eta_{i_n} \left(\|A_{i_n} x_n - y_{i_n}\|^2 
- \|A_{i_n} x_{n-1} - y_{i_n}\|^2\right).
\end{align*}
Consequently 
\begin{align*}
\|z_{n+1} - x^\dag\|^2
&\leq \|z_n - x^\dag\|^2 + n\eta_{i_n} \|A_{i_n} x_{n-1} - y_{i_n}\|^2 \displaybreak[0] \nonumber \\
&\quad \, - \left(n+2 - \eta_{i_n} \|A_{i_n}\|^2\right) \eta_{i_n} \|A_{i_n} x_n - y_{i_n}\|^2.
\end{align*}
Taking the full expectation gives  
\begin{align}\label{IMA.10}
\EE[\|z_{n+1} - x^\dag\|^2] 
& = \EE[\EE[\|z_{n+1} - x^\dag\|^2|\F_n ]] \nonumber \\ 
& \le \EE[\|z_n - x^\dag\|^2] + \frac{n}{p} \sum_{i=1}^p \eta_i \EE[\|A_i x_{n-1} - y_i\|^2] \displaybreak[0]\nonumber \\
& \quad \, - \frac{1}{p} \sum_{i=1}^p (n+2 - \eta_i \|A_i\|^2) \eta_i 
\EE[\|A_i x_n - y_i\|^2] \displaybreak[0]\nonumber \\
& \le \EE[\|z_n - x^\dag\|^2] + \frac{n}{p} \sum_{i=1}^p \eta_i \EE[\|A_i x_{n-1} - y_i\|^2] \displaybreak[0]\nonumber \\
& \quad \, - \frac{n+1+ c_0}{p} \sum_{i=1}^p \eta_i \EE[\|A_i x_n - y_i\|^2].
\end{align}
Regrouping the terms we thus obtain the desired inequality. 
\end{proof}

By using Lemma \ref{SHBM:lem1} and Lemma \ref{SHBM:lem2}, we can obtain the following 
estimate on $\EE[\|x_n - x^\dag\|^2]$ under the source condition (\ref{SC}). 

\begin{lemma}\label{SHBM.thm1}
Consider the sequence $\{x_n\}$ defined by Algorithm \ref{alg:SHB} using exact data.
Assume $0<\eta_i <1/\|A_i\|^2$ for $i = 1, \cdots, p$. If the $x_0$-minimal norm 
solution $x^\dag$ satisfies the source condition (\ref{SC}), then 
\begin{align*}
\EE\left[\|x_n - x^\dag\|^2\right] \le \frac{p M_0}{c_0(n+1)} 
\end{align*}
for all integers $n \ge 0$, where $c_0>0$ is the constant from Lemma \ref{IMA.lem1.1} and 
$M_0:= \|x_0-x^\dag\|^2 + c_0 \sum_{i=1}^p \frac{1}{\eta_i} \|\la_i^\dag\|^2$. 
\end{lemma}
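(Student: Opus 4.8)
The plan is to fuse Lemma \ref{SHBM:lem1} and Lemma \ref{SHBM:lem2} into a single monotone Lyapunov-type recursion in which the stray summation term has been eliminated. Concretely, I would divide the inequality in Lemma \ref{SHBM:lem2} by $c_0$ and add it to the inequality in Lemma \ref{SHBM:lem1}. The term $\frac{1}{p}\sum_{i=1}^p \eta_i \EE[\|A_i x_n - y_i\|^2]$ that sits with a plus sign on the right-hand side of Lemma \ref{SHBM:lem1} is then cancelled exactly by the term $-\frac{c_0}{p}\sum_{i=1}^p \eta_i \EE[\|A_i x_n - y_i\|^2]$ coming from $\frac{1}{c_0}\times$(Lemma \ref{SHBM:lem2}); this cancellation is the whole reason for the weight $1/c_0$, and it is forced.

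After the cancellation I would introduce
\[
\Psi_n := \EE\left[r_n + \frac{n}{p}\|x_{n-1}-x^\dag\|^2 + \frac{1}{c_0}\|z_n - x^\dag\|^2 + \frac{n}{c_0 p}\sum_{i=1}^p \eta_i \|A_i x_{n-1} - y_i\|^2\right].
\]
Writing $\frac{n+2}{p}\EE[\|x_n-x^\dag\|^2] = \frac{n+1}{p}\EE[\|x_n-x^\dag\|^2] + \frac{1}{p}\EE[\|x_n-x^\dag\|^2]$, the combined inequality collapses to $\Psi_{n+1} + \frac{1}{p}\EE[\|x_n-x^\dag\|^2] \le \Psi_n$. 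Since $\frac{1}{p}\EE[\|x_n-x^\dag\|^2]\ge 0$ this gives $\Psi_{n+1}\le \Psi_n$, hence by induction $\Psi_n \le \Psi_0$ for all $n\ge 0$.

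It then remains to evaluate $\Psi_0$ and to read off the rate. Because $\la_{-1}=\la_0=0$ one has $u_0=\la_0=0$ and $z_0 = x_0$, so the two $n$-weighted terms in $\Psi_0$ vanish, $r_0 = \sum_{i=1}^p \frac{1}{\eta_i}\|\la_i^\dag\|^2$, and therefore $\Psi_0 = \sum_{i=1}^p \frac{1}{\eta_i}\|\la_i^\dag\|^2 + \frac{1}{c_0}\|x_0-x^\dag\|^2 = M_0/c_0$. Finally, since $r_n$, $\|z_n-x^\dag\|^2$, and the residual sum are all nonnegative term by term, they may be discarded to yield $\frac{n}{p}\EE[\|x_{n-1}-x^\dag\|^2] \le \Psi_n \le \Psi_0 = M_0/c_0$ for every $n\ge 1$; reindexing $n\mapsto n+1$ gives $\EE[\|x_n-x^\dag\|^2]\le \frac{pM_0}{c_0(n+1)}$ for all $n\ge 0$, which is the assertion.

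I do not expect a genuine obstacle here: given Lemmas \ref{SHBM:lem1} and \ref{SHBM:lem2}, the argument is pure bookkeeping — assembling the correct linear combination, checking that the discarded terms are nonnegative, and verifying that the source condition (\ref{SC}) together with $\la_{-1}=\la_0=0$ makes $r_0$ reduce to $\sum_i \eta_i^{-1}\|\la_i^\dag\|^2$ so that the constant collapses to $M_0/c_0$. The substantive work of the section has already been absorbed into the two preceding lemmas, in particular the momentum-adapted residual bookkeeping with the quantities $u_n$ and $\|A_i x_n - y_i\|^2$.
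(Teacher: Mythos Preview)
Your proposal is correct and is essentially identical to the paper's proof: the paper multiplies Lemma~\ref{SHBM:lem1} by $c_0$ and adds it to Lemma~\ref{SHBM:lem2} (equivalent, up to a global factor of $c_0$, to your choice of dividing Lemma~\ref{SHBM:lem2} by $c_0$), obtains the same telescoping Lyapunov inequality, and extracts $\frac{c_0(n+1)}{p}\EE[\|x_n-x^\dag\|^2]\le M_0$ after evaluating the initial value using $z_0=x_0$ and $u_0=0$. Your bookkeeping --- the cancellation of the residual sum, the evaluation $\Psi_0=M_0/c_0$, and the reindexing at the end --- all checks out.
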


\begin{proof}
By virtue of Lemma \ref{SHBM:lem1} and Lemma \ref{SHBM:lem2} we have 
\begin{align*}
& \EE\left[\|z_{n+1} - x^\dag\|^2 + \frac{n+1}{p} \sum_{i=1}^p \eta_i \|A_i x_n - y_i\|^2 
+ c_0 \left(r_{n+1} + \frac{n+1}{p} \|x_n - x^\dag\|^2\right)\right] \displaybreak[0]\\
& \le \EE\left[\|z_n - x^\dag\|^2 + \frac{n}{p} \sum_{i=1}^p \eta_i \|A_i x_{n-1} - y_i\|^2 
+ c_0 \left(r_n + \frac{n}{p} \|x_{n-1} - x^\dag\|^2\right)\right] 
\end{align*}
for all integers $n =0, 1, \cdots$. Recursively using this inequality gives 
\begin{align*}
& \EE\left[\|z_{n+1} - x^\dag\|^2 + \frac{n+1}{p} \sum_{i=1}^p \eta_i \|A_i x_n - y_i\|^2 
+ c_0 \left(r_{n+1} + \frac{n+1}{p} \|x_n - x^\dag\|^2\right)\right] \\
& \le  \EE\left[\|z_0 - x^\dag\|^2 + c_0 r_0 \right] 
= \|x_0-x^\dag\|^2 + c_0 \sum_{i=1}^p \frac{1}{\eta_i} \|\la_i^\dag\|^2 =:M_0. 
\end{align*}
This in particular implies that 
\begin{align*}
\frac{(n+1) c_0}{p} \EE[\|x_n - x^\dag\|^2] \le M_0
\end{align*}
which shows the desired estimate.
\end{proof}

Based on Lemma \ref{IMA.lem1.1} and Lemma \ref{SHBM.thm1}, we can now prove the following 
convergence rate result on the stochastic gradient method with heavy-ball momentum
under an {\it a priori} stopping rule. 

\begin{theorem}\label{SHB.thm}
Consider the sequence $\{x_n^\d\}$ defined by Algorithm \ref{alg:SHB}. 
Assume $0<\eta_i <1/\|A_i\|^2$ for $i = 1, \cdots, p$. If the $x_0$-minimal norm 
solution $x^\dag$ satisfies the source condition (\ref{SC}), then 
\begin{align*}
\EE\left[\|x_n^\d - x^\dag\|^2\right] \le \frac{2 p M_0}{c_0(n+1)} + \frac{2\bar \eta}{c_0 p} n \d^2 
\end{align*}
for all integers $n \ge 0$. Consequently, if the integer $n_\d$ is chosen such that 
$(n_\d + 1)/p \sim \d^{-1}$, then 
\begin{align*}
\EE\left[\|x_{n_\d}^\d - x^\dag\|^2\right] \le C \d, 
\end{align*}
where $C$ is a constant depending only on $c_0$, $\bar \eta$ and $M_0$. 
\end{theorem}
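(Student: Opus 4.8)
The plan is to combine the stability estimate of Lemma~\ref{IMA.lem1.1} with the convergence rate for exact data from Lemma~\ref{SHBM.thm1} via the splitting already recorded in~(\ref{SHB.231}). Concretely, I would start from
\begin{align*}
\EE[\|x_n^\d - x^\dag\|^2] \le 2 \EE[\|x_n^\d - x_n\|^2] + 2 \EE[\|x_n - x^\dag\|^2],
\end{align*}
then bound the first term on the right by $\frac{2\bar\eta}{c_0 p} n\d^2$ using Lemma~\ref{IMA.lem1.1} and the second term by $\frac{2pM_0}{c_0(n+1)}$ using Lemma~\ref{SHBM.thm1}. Adding these gives the first displayed inequality of the theorem immediately, with no further work; this is the ``error decomposition into approximation error plus propagated data noise'' that is standard for iterative regularization.

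For the second assertion, I would treat the right-hand side
\begin{align*}
\Phi(n) := \frac{2pM_0}{c_0(n+1)} + \frac{2\bar\eta}{c_0 p} n\d^2
\end{align*}
as a function to be (approximately) minimized over $n$. The first term decreases like $p/(n+1)$ and the second increases like $(n/p)\d^2$, so balancing them suggests choosing $n_\d$ with $(n_\d+1)/p \sim \d^{-1}$, i.e. $(n_\d+1)/p$ comparable to $1/\d$ up to a fixed multiplicative constant. Substituting this choice, the first term becomes $\frac{2M_0}{c_0}\cdot\frac{p}{n_\d+1} \lesssim \frac{2M_0}{c_0}\d$, and the second becomes $\frac{2\bar\eta}{c_0}\cdot\frac{n_\d}{p}\d^2 \le \frac{2\bar\eta}{c_0}\cdot\frac{n_\d+1}{p}\d^2 \lesssim \frac{2\bar\eta}{c_0}\d$, so $\Phi(n_\d) \le C\d$ with $C$ depending only on $c_0$, $\bar\eta$ and $M_0$ (and the implied constant in the relation $(n_\d+1)/p \sim \d^{-1}$, which is absolute). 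This yields $\EE[\|x_{n_\d}^\d - x^\dag\|^2] \le C\d$ as claimed.

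Honestly there is no real obstacle here: the theorem is a direct corollary of the two preceding lemmas together with the elementary $2ab \le a^2 + b^2$-type bookkeeping. The only mild care needed is in stating precisely what ``$(n_\d+1)/p \sim \d^{-1}$'' means — namely that there are absolute constants $0 < c \le c'$ with $c\,\d^{-1} \le (n_\d+1)/p \le c'\,\d^{-1}$ — and in checking that both terms of $\Phi(n_\d)$ are then $O(\d)$ with constants absorbed into $C$. I would also remark, as is customary, that this is the same order as the best worst-case rate obtainable under the source condition~(\ref{SC}) for Landweber-type methods, so the momentum does not degrade the asymptotic rate while (as the numerical section will show) improving practical performance.
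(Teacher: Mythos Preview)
Your proposal is correct and matches the paper's own proof essentially verbatim: the paper simply invokes~(\ref{SHB.231}) together with Lemma~\ref{IMA.lem1.1} and Lemma~\ref{SHBM.thm1} and declares the result immediate. Your additional discussion of how the choice $(n_\d+1)/p \sim \d^{-1}$ balances the two terms of $\Phi(n)$ is a welcome elaboration that the paper leaves implicit.
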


\begin{proof}
By invoking the inequality (\ref{SHB.231}), we can use Lemma \ref{IMA.lem1.1} and 
Lemma \ref{SHBM.thm1} to conclude the proof immediately. 
\end{proof}

\subsection{\bf Convergence}

In Theorem \ref{SHB.thm} we have established a convergence rate result for Algorithm \ref{alg:SHB}  
when the $x_0$-minimal norm solution $x^\dag$ satisfies the source condition (\ref{SC}). This source 
condition might be too strong to be satisfied in applications. It is necessary to establish 
a convergence result on Algorithm \ref{alg:SHB} without using any source condition on $x^\dag$. 
In this subsection we will prove the following convergence result. 

\begin{theorem}\label{SHB:thm4}
Consider the sequence $\{x_n^\d\}$ defined by Algorithm \ref{alg:SHB}.  
Assume that $0<\eta_i <1/\|A_i\|^2$ for $i = 1, \cdots, p$ and let $x^\dag$ denote the unique 
$x_0$-minimal norm solution of (\ref{sys}). Then for the integer $n_\d$ 
chosen such that $n_\d \to \infty$ and $\d^2 n_\d \to 0$ as $\d \to 0$ there holds 
$$
\EE[\|x_{n_\d}^\d - x^\dag\|^2] \to 0 \quad \mbox{ as } \d \to 0.
$$
\end{theorem}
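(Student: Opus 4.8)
The plan is to run the standard density/splitting argument for iterative regularization methods, using the source-condition rate from Theorem \ref{SHB.thm} as the engine. Fix $\ep>0$. Since $x^\dag - x_0 \in \overline{\mathrm{Ran}(A^*)}$ (this is the characterization of the $x_0$-minimal norm solution, $x^\dag - x_0 \perp \N(A)$ and $\overline{\mathrm{Ran}(A^*)} = \N(A)^\perp$), there is a point $\hat x = x_0 + A^*\hat\la$ with $\hat\la = (\hat\la_1,\dots,\hat\la_p) \in Y_1\times\cdots\times Y_p$ such that $\|x^\dag - \hat x\|^2 < \ep$. The key observation is that $\hat x$ is itself the $x_0$-minimal norm solution of the perturbed consistent system $A_i x = A_i\hat x =: \hat y_i$, $i=1,\dots,p$, and it satisfies the source condition (\ref{SC}) relative to that system with $\hat\la$ in place of $\la^\dag$. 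The noisy data $y^\d$ we actually have satisfies $\|y_i^\d - \hat y_i\| \le \|y_i^\d - y_i\| + \|A_i(x^\dag - \hat x)\| \le \d_i + \|A_i\|\,\|x^\dag-\hat x\|$, so $y^\d$ is legitimate noisy data for the $\hat y$-system with total noise level $\hat\d \le \d + \|A\|\sqrt{p}\,\|x^\dag - \hat x\|$ (or, more carefully, a componentwise bound feeding into the definition of $\hat\d$).

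Next I would apply Theorem \ref{SHB.thm} with $\hat y$ as the exact data and $\hat x$ as the sought solution: denoting by $\hat M_0 := \|x_0-\hat x\|^2 + c_0\sum_{i=1}^p \frac{1}{\eta_i}\|\hat\la_i\|^2$ the corresponding constant, we get
\begin{align*}
\EE\left[\|x_n^\d - \hat x\|^2\right] \le \frac{2pM(\ep)}{c_0(n+1)} + \frac{2\bar\eta}{c_0 p}\, n\, \hat\d^2,
\end{align*}
where $M(\ep) = \hat M_0$ depends on $\ep$ through the choice of $\hat x$. Combining with $\|x_n^\d - x^\dag\|^2 \le 2\|x_n^\d - \hat x\|^2 + 2\|\hat x - x^\dag\|^2$ gives
\begin{align*}
\EE\left[\|x_n^\d - x^\dag\|^2\right] \le \frac{4pM(\ep)}{c_0(n+1)} + \frac{4\bar\eta}{c_0 p}\, n\, \hat\d^2 + 2\ep.
\end{align*}
Now plug in $n = n_\d$. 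Since $n_\d\to\infty$, the first term tends to $0$ (for the fixed $\ep$, hence fixed $M(\ep)$). For the middle term, $\hat\d^2 \le 2\d^2 + 2\|A\|^2 p\,\ep$ (up to a harmless constant from the componentwise splitting), so $n_\d\hat\d^2 \lesssim n_\d\d^2 + n_\d\,\ep$; the first piece tends to $0$ by the hypothesis $\d^2 n_\d\to 0$, but the second piece $n_\d\,\ep$ blows up because $n_\d\to\infty$. This is the main obstacle, and it is exactly why a naive one-shot split fails.

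To fix it, I would use a diagonal/two-parameter argument: choose the approximation $\hat x$ depending on $\d$, say $\hat x = \hat x^{(\d)}$ with $\|x^\dag - \hat x^{(\d)}\|^2 = \ep(\d) \to 0$ slowly enough that $\hat\d^2 \to 0$ faster than $1/n_\d$ — concretely, by a standard extraction one picks $\ep(\d)\downarrow 0$ with $n_\d\,\|A\|^2 p\,\ep(\d)\to 0$, which is possible since $n_\d\d^2\to0$ lets us interpolate. The only subtlety is that $\hat M_0 = \hat M_0(\d)$ now also depends on $\d$ through $\|\hat\la^{(\d)}\|$, which may grow as $\ep(\d)\to 0$; so one must choose $\ep(\d)$ decaying slowly enough (relative to $1/n_\d$) that $\hat M_0(\d)/n_\d \to 0$ still holds. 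Since $n_\d\to\infty$ is the only constraint linking the two and $\hat M_0(\d)$ is a fixed function of $\ep(\d)$, a routine "for each $k$ pick $\d_k$" diagonalization produces a single admissible rate $\ep(\d)$; I would spell this out by first proving $\limsup_{\d\to0}\EE[\|x_{n_\d}^\d - x^\dag\|^2] \le 3\ep$ for every fixed $\ep>0$ (using that for $\d$ small enough, depending on $\ep$ and the now-fixed $M(\ep)$, both bad terms are $\le\ep$), and then letting $\ep\to0$. That $\limsup$-for-every-$\ep$ formulation is the cleanest way to avoid writing an explicit diagonal sequence, and it is the step that carries the real content of the proof.
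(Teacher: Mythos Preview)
Your argument has a genuine gap that neither of your proposed fixes closes. When you regard $y^\d$ as noisy data for the surrogate system $A x = \hat y := A\hat x$, the effective noise level $\hat\d$ picks up a contribution $\|A(x^\dag - \hat x)\|$ which, once $\hat x$ is fixed, is a \emph{fixed positive number} (since $x^\dag - \hat x \in \mbox{Null}(A)^\perp\setminus\{0\}$ whenever $x^\dag$ fails the source condition). Feeding this into Theorem~\ref{SHB.thm} produces a term of order $n_\d\,\hat\d^2 \ge c\, n_\d \|A(x^\dag-\hat x)\|^2 \to \infty$ as $\d\to 0$, so for each fixed $\ep$ your displayed inequality only yields $\limsup_{\d\to 0}\EE[\|x_{n_\d}^\d - x^\dag\|^2] = +\infty$, not $\le 3\ep$; the closing ``$\limsup$ for every $\ep$'' step is therefore false as written. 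The diagonal alternative is not routine either: it would require $\ep(\d) = o(1/n_\d)$ together with $M(\ep(\d)) = o(n_\d)$, but $M(\ep)\to\infty$ as $\ep\downarrow 0$ at a rate governed by the ill-posedness of $A$ (for severely ill-posed operators one can have $M(\ep)\sim e^{c/\ep}$), while $n_\d$ is \emph{prescribed} by the theorem. In general no admissible $\ep(\d)$ exists.

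The paper avoids this by perturbing a different object. It first splits through the exact-data iterate $x_n$ and handles $\EE[\|x_{n_\d}^\d - x_{n_\d}\|^2] \le \frac{\bar\eta}{c_0 p} n_\d\d^2\to 0$ via Lemma~\ref{IMA.lem1.1}. For the remaining task $\EE[\|x_n - x^\dag\|^2]\to 0$ (Theorem~\ref{SHBM.thm2}) it keeps the \emph{same exact data} $y$ but runs the iteration from a perturbed \emph{initial point} $\hat x_0$ chosen with $\|x_0-\hat x_0\|<\ep$ and $x^\dag - \hat x_0 \in \mbox{Ran}(A^*)$, producing $\hat x_n$. The lemma your plan is missing is a uniform-in-$n$ stability estimate for this kind of perturbation: $\EE[\|x_n - \hat x_n\|^2] \le \|x_0-\hat x_0\|^2$ for every $n\ge 0$ (Lemma~\ref{IMA.lem1.3}); the factor $n$ that inevitably appears for \emph{data} perturbations is absent for \emph{initial-point} perturbations. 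Combined with the source-condition rate $\EE[\|\hat x_n - x^\dag\|^2] = O(1/n)$ (Lemma~\ref{IMA.lem1.2}), one gets $\limsup_n \EE[\|x_n - x^\dag\|^2]\le 2\ep^2$ for every $\ep>0$, after which letting $\ep\to 0$ is legitimate. In short: the density argument must load the approximation error into the initial condition (stable uniformly in $n$) rather than into the data (amplified by $n$ through the stability bound).
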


Considering the stability estimate given in Lemma \ref{IMA.lem1.1}, we will establish Theorem \ref{SHB:thm4} 
by showing that $\EE[\|x_n - x^\dag\|^2] \to 0$ as $n \to \infty$ for the sequence $\{x_n\}$ defined by 
Algorithm \ref{alg:SHB} using exact data. Due to the randomness of $i_n$ and the momentum term 
$\beta_n (x_n -x_{n-1})$ involved in (\ref{SHBM-exact}), 
it is highly nontrivial to derive $\EE[\|x_n - x^\dag\|^2] \to 0$ straightforwardly. We will 
use a perturbation argument developed in \cite{Jin2010,Jin2011}. Namely, as an $x_0$-minimal 
norm solution, there holds $x^\dag - x_0 \in \mbox{Null}(A)^\perp = \overline{\mbox{Ran}(A^*)}$, 
and thus we may choose $\hat x_0 \in X$ as close to $x_0$ as we want such that 
$x^\dag - \hat x_0 \in \mbox{Ran}(A^*)$. Define $\{\hat x_n\}$ by 
\begin{align}\label{SHB3}
\hat x_{n+1} = \hat x_n - \a_n A_{i_n}^*(A_{i_n} \hat x_n - y_{i_n}) + \beta_n (\hat x_n - \hat x_{n-1}) 
\end{align}
with $\hat x_{-1} = \hat x_0$ and the index $i_n \in \{1, \cdots, p\}$ chosen uniformly at random. 
We will establish $\EE[\|x_n - x^\dag\|^2] \to 0$ as $n\to \infty$ 
by deriving estimates on $\EE[\|\hat x_n - x^\dag\|^2]$ and $\EE[\|x_n - \hat x_n\|^2]$. 

For $\EE[\|\hat x_n - x^\dag\|^2]$ we can apply the same argument in the proof of 
Lemma \ref{SHBM.thm1} to the sequence $\{\hat x_n\}$ to obtain the following result.

\begin{lemma}\label{IMA.lem1.2}
Consider the sequence $\{\hat x_n\}$ defined by (\ref{SHB3}), where $\hat x_0$ is chosen such that 
$x^\dag - \hat x_0 \in \emph{Ran}(A^*)$. Assume that $0<\eta_i <1/\|A_i\|^2$ for $i = 1, \cdots, p$. 
Then for any integer $n\ge 0$ there holds 
\begin{align*}
\EE[\|\hat x_n - x^\dag\|^2] \le \frac{p \hat M_0}{c_0 (n+1)},
\end{align*}
where $\hat M_0:= \|\hat x_0-x^\dag\|^2 + c_0 \sum_{i=1}^p \frac{1}{\eta_i} \|\hat \la_i^\dag\|^2$ 
and $\hat \la^\dag :=(\hat \la_1^\dag, \cdots, \hat \la_p^\dag) \in Y_1\times \cdots \times Y_p$ 
is such that $x^\dag - \hat x_0 = A^* \hat \la^\dag$. 
\end{lemma}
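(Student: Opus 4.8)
The plan is to mimic the proof of Lemma \ref{SHBM.thm1} essentially verbatim, observing that the only properties used in that proof were: (i) the iteration defining $\{x_n\}$ via (\ref{IMA0}), (ii) the source representation $x_n = x_0 + A^*\la_n$ with $\la_{-1}=\la_0=0$, and (iii) the assumption $x^\dag - x_0 = A^*\la^\dag$. For the sequence $\{\hat x_n\}$ defined by (\ref{SHB3}), the same structure is present with $\hat x_0$ in place of $x_0$: since $x^\dag - \hat x_0 \in \mathrm{Ran}(A^*)$, we may write $x^\dag - \hat x_0 = A^*\hat\la^\dag$, which is exactly the analogue of the source condition (\ref{SC}). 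So the strategy is to transcribe the three supporting lemmas (Lemma \ref{SHBM:lem1}, Lemma \ref{SHBM:lem2}, Lemma \ref{SHBM.thm1}) with $x_0 \rightsquigarrow \hat x_0$, $x_n \rightsquigarrow \hat x_n$, $z_n \rightsquigarrow \hat z_n$, $\la_n \rightsquigarrow \hat\la_n$, $\la^\dag \rightsquigarrow \hat\la^\dag$, and $M_0 \rightsquigarrow \hat M_0$.

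Concretely, first I would introduce the moving-average reformulation of (\ref{SHB3}): set $\hat z_0 = \hat x_0$, $\hat z_{n+1} = \hat z_n - \eta_{i_n} A_{i_n}^*(A_{i_n}\hat x_n - y_{i_n})$ and $\hat x_{n+1} = \frac{n+1}{n+2}\hat x_n + \frac{1}{n+2}\hat z_{n+1}$, which is equivalent to (\ref{SHB3}) by the same computation that gave (\ref{IMA0}) from (\ref{SHBM-exact}). Then I would define $\hat\la_n$ by $\hat\la_{-1}=\hat\la_0=0$ and the recursion (\ref{SHB10}) with $x_n$ replaced by $\hat x_n$, so that $\hat x_n = \hat x_0 + A^*\hat\la_n$ holds, set $\hat u_n := \hat\la_n + n(\hat\la_n - \hat\la_{n-1})$ and $\hat r_n := \sum_{i} \frac{1}{\eta_i}\|\hat u_{n,i} - \hat\la_i^\dag\|^2$. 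The analogue of Lemma \ref{SHBM:lem1} then reads
\begin{align*}
\EE\left[\hat r_{n+1} + \frac{n+2}{p}\|\hat x_n - x^\dag\|^2\right]
\le \EE\left[\hat r_n + \frac{n}{p}\|\hat x_{n-1} - x^\dag\|^2\right]
+ \frac{1}{p}\sum_{i=1}^p \eta_i \EE\left[\|A_i \hat x_n - y_i\|^2\right],
\end{align*}
proved exactly as before using $A_{i_n} x^\dag = y_{i_n}$, the polarization identity, the tower property, and $A^*(\hat u_n - \hat\la^\dag) = \hat x_n - x^\dag + n(\hat x_n - \hat x_{n-1})$. Likewise the analogue of Lemma \ref{SHBM:lem2} gives
\begin{align*}
& \EE\left[\|\hat z_{n+1} - x^\dag\|^2 + \frac{n+1}{p}\sum_{i=1}^p \eta_i \|A_i \hat x_n - y_i\|^2\right] \\
& \le \EE\left[\|\hat z_n - x^\dag\|^2 + \frac{n}{p}\sum_{i=1}^p \eta_i \|A_i \hat x_{n-1} - y_i\|^2\right]
- \frac{c_0}{p}\sum_{i=1}^p \eta_i \EE[\|A_i \hat x_n - y_i\|^2],
\end{align*}
with the same constant $c_0$, since the only ingredient is $0<\eta_i<1/\|A_i\|^2$ and $\hat z_n = \hat x_n + n(\hat x_n - \hat x_{n-1})$, $A_{i_n} x^\dag = y_{i_n}$.

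Finally, adding $c_0$ times the first inequality to the second cancels the troublesome term $\frac{1}{p}\sum_i \eta_i\EE[\|A_i\hat x_n - y_i\|^2]$ against part of $-\frac{c_0}{p}\sum_i \eta_i\EE[\|A_i\hat x_n - y_i\|^2]$ (leaving a nonpositive remainder is not even needed — one simply gets a monotone-in-$n$ quantity), so that the combined quantity
$\EE[\|\hat z_{n+1} - x^\dag\|^2 + \frac{n+1}{p}\sum_i \eta_i\|A_i\hat x_n - y_i\|^2 + c_0(\hat r_{n+1} + \frac{n+1}{p}\|\hat x_n - x^\dag\|^2)]$
is nonincreasing in $n$ and hence bounded above by its value at $n=0$, namely $\|\hat z_0 - x^\dag\|^2 + c_0\hat r_0 = \|\hat x_0 - x^\dag\|^2 + c_0\sum_i \frac{1}{\eta_i}\|\hat\la_i^\dag\|^2 = \hat M_0$. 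Dropping all nonnegative terms except $\frac{(n+1)c_0}{p}\EE[\|\hat x_n - x^\dag\|^2]$ yields the claimed bound $\EE[\|\hat x_n - x^\dag\|^2] \le p\hat M_0 / (c_0(n+1))$. There is no real obstacle here: the proof is a routine adaptation, and the only point requiring a word of care is checking that replacing $x_0$ by $\hat x_0$ does not disturb the definition of $c_0$ (it does not, since $c_0$ depends only on the $\eta_i$ and $\|A_i\|$) and that the source representation $x^\dag - \hat x_0 = A^*\hat\la^\dag$ is exactly what the argument of Lemma \ref{SHBM:lem1} uses in the identity $A^*(\hat u_n - \hat\la^\dag) = \hat x_n - x^\dag + n(\hat x_n - \hat x_{n-1})$.
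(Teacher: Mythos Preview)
Your proposal is correct and matches the paper's approach exactly: the paper simply states that one applies the same argument as in the proof of Lemma~\ref{SHBM.thm1} to the sequence $\{\hat x_n\}$, and you have spelled out precisely that transcription with the substitutions $x_0 \rightsquigarrow \hat x_0$, $\la^\dag \rightsquigarrow \hat\la^\dag$, $M_0 \rightsquigarrow \hat M_0$. Your check that $c_0$ is unaffected and that the source representation $x^\dag - \hat x_0 = A^*\hat\la^\dag$ is the only place the initial point enters the argument is exactly the right point of care.
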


We next derive estimate on $\EE[\|x_n - \hat x_n\|^2]$ in terms of $\|x_0 - \hat x_0\|^2$.  
We have the following result. 

\begin{lemma}\label{IMA.lem1.3}
Consider the sequences $\{x_n\}$ and $\{\hat x_n\}$ defined by (\ref{SHBM-exact}) and (\ref{SHB3}) 
respectively. Assume that $0<\eta_i \le 1/\|A_i\|^2$ for $i = 1, \cdots, p$. Then for all $n \ge 0$ 
there holds 
\begin{align}\label{SHB.71}
\EE[\|x_n - \hat x_n\|^2] \le \|x_0 - \hat x_0\|^2. 
\end{align}
\end{lemma}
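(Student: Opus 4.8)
The plan is to mimic the moving-average/iterate analysis that was used for Lemmas~\ref{IMA.lem1.1} and \ref{SHBM:lem2}, but now applied to the difference $e_n := x_n - \hat x_n$. Both $\{x_n\}$ and $\{\hat x_n\}$ are driven by the \emph{same} sample path $\{i_n\}$, so subtracting (\ref{SHBM-exact}) from (\ref{SHB3}) and using the equivalent moving-average forms (\ref{IMA0}) and the analogous form for $\hat x_n$, I would introduce $w_n := z_n - \hat z_n$ (where $\hat z_n$ is the companion sequence for $\hat x_n$) and obtain
\begin{align*}
w_{n+1} &= w_n - \eta_{i_n} A_{i_n}^* A_{i_n} e_n, \\
e_{n+1} &= \frac{n+1}{n+2} e_n + \frac{1}{n+2} w_{n+1},
\end{align*}
with $w_0 = e_{-1} = e_0 = x_0 - \hat x_0$. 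Note that the data terms $y_{i_n}$ cancel exactly, which is why no noise-type error appears and we get a clean bound with constant $1$ on the right-hand side.

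Next I would expand $\|w_{n+1}\|^2$ via the polarization identity (\ref{polar}), exactly as in the proof of Lemma~\ref{SHBM:lem2}. Using the second equation to write $w_n = e_n + n(e_n - e_{n-1})$ (this mirrors $z_n = x_n + n(x_n - x_{n-1})$), and using $0 < \eta_i \le 1/\|A_i\|^2$ so that $\eta_{i_n}^2 \|A_{i_n}\|^2 \|A_{i_n} e_n\|^2 \le \eta_{i_n}\|A_{i_n} e_n\|^2$, together with (\ref{polar2}) applied to the cross term $2n\eta_{i_n}\langle A_{i_n} e_n, A_{i_n}(e_n - e_{n-1})\rangle$, I expect to reach
$$
\|w_{n+1}\|^2 \le \|w_n\|^2 + n\eta_{i_n}\|A_{i_n} e_{n-1}\|^2 - (n+1)\eta_{i_n}\|A_{i_n} e_n\|^2 .
$$
Taking the conditional expectation on $\F_n$, then the full expectation and the tower property, gives
$$
\EE\!\left[\|w_{n+1}\|^2 + \frac{n+1}{p}\sum_{i=1}^p \eta_i \|A_i e_n\|^2\right] \le \EE\!\left[\|w_n\|^2 + \frac{n}{p}\sum_{i=1}^p \eta_i \|A_i e_{n-1}\|^2\right],
$$
so the bracketed quantity is non-increasing; since at $n=0$ it equals $\|x_0 - \hat x_0\|^2$ (the sum term vanishes because $e_{-1}=e_0$ and $z_{-1}$-type terms start at $0$), we get $\EE[\|w_{n+1}\|^2] \le \|x_0 - \hat x_0\|^2$ for all $n$. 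Finally, by the convexity of $x \mapsto \|x\|^2$ applied to $e_{n+1} = \frac{n+1}{n+2} e_n + \frac{1}{n+2} w_{n+1}$ and an induction on $n$ (base case $e_0 = x_0 - \hat x_0$), I conclude $\EE[\|e_n\|^2] \le \|x_0 - \hat x_0\|^2$, which is (\ref{SHB.71}).

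The main obstacle is the bookkeeping around the momentum cross term: one must correctly identify $w_n - e_n = n(e_n - e_{n-1})$ from the moving-average relation and then apply (\ref{polar2}) in the right direction so that the $\|A_{i_n} e_n\|^2$ coefficient comes out as $-(n+2-\eta_{i_n}\|A_{i_n}\|^2) \le -(n+1)$ and the $\|A_{i_n} e_{n-1}\|^2$ coefficient as $+n$, matching a telescoping Lyapunov functional. This is entirely parallel to Lemma~\ref{SHBM:lem2}; the only genuine difference is that here we need the closed-interval assumption $\eta_i \le 1/\|A_i\|^2$ (rather than the strict $\eta_i < 1/\|A_i\|^2$) because we no longer need the strict gap $c_0$ to absorb a noise term — we only need $n+2-\eta_{i_n}\|A_{i_n}\|^2 \ge n+1$. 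I would double-check that the induction for passing from the $w$-bound to the $e$-bound does not need the sum term, which it does not since convexity only involves $\|e_n\|^2$ and $\|w_{n+1}\|^2$.
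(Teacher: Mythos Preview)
Your proposal is correct and follows essentially the same approach as the paper's proof: the paper also passes to the moving-average forms, sets $z_n-\hat z_n$ (your $w_n$) and $x_n-\hat x_n$ (your $e_n$), expands $\|z_{n+1}-\hat z_{n+1}\|^2$ via (\ref{polar}), uses the relation $w_n-e_n=n(e_n-e_{n-1})$ and (\ref{polar2}) to obtain the pathwise bound with coefficients $+n$ and $-(n+2-\eta_{i_n}\|A_{i_n}\|^2)\le -(n+1)$, telescopes in expectation to get $\EE[\|z_n-\hat z_n\|^2]\le\|x_0-\hat x_0\|^2$, and then passes to $\EE[\|x_n-\hat x_n\|^2]$ by convexity of $\|\cdot\|^2$ and induction. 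One small wording quibble: the sum term at $n=0$ vanishes because its coefficient is $n=0$, not because $e_{-1}=e_0$.
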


\begin{proof}
Note that the sequence $\{\hat x_n\}$ from (\ref{SHB3}) can be equivalently defined by 
\begin{align}\label{IMA3}
\begin{split}
\hat z_{n+1} & = \hat z_n - \eta_{i_n} A_{i_n}^* (A_{i_n} \hat x_n - y_{i_n}), \\
\hat x_{n+1} & = \frac{n+1}{n+2} \hat x_n + \frac{1}{n+2} \hat z_{n+1}
\end{split}
\end{align}
with $\hat z_0 = \hat x_0$. We first use (\ref{IMA0}) and (\ref{IMA3}) to show that 
\begin{align}\label{IMA11}
\EE[\|z_{n} - \hat z_{n}\|^2] \le \|x_0 - \hat x_0\|^2, \quad \forall n \ge 0
\end{align}
which is trivial for $n=0$. In order to establish (\ref{IMA11}) for all integers $n\ge 1$, by
virtue of (\ref{IMA0}) and (\ref{IMA3}) we can see that along any sample path there holds  
\begin{align}\label{IMA03}
\begin{split}
z_{n+1} - \hat z_{n+1} & = z_n - \hat z_n - \eta_{i_n} A_{i_n}^* A_{i_n} (x_n - \hat x_n), \\
x_{n+1} - \hat x_{n+1} & = \frac{n+1}{n+2} (x_n - \hat x_n) + \frac{1}{n+2} (z_{n+1} - \hat z_{n+1}).
\end{split}
\end{align}
Therefore, by using the polarization identity (\ref{polar}) we can obtain 
\begin{align*}
\|z_{n+1} - \hat z_{n+1}\|^2 
& = \|z_n - \hat z_n - \eta_{i_n} A_{i_n}^* A_{i_n}(x_n - \hat x_n)\|^2 \displaybreak[0]\\
& = \|z_n - \hat z_n\|^2 + \eta_{i_n}^2 \|A_{i_n}^* A_{i_n}(x_n - \hat x_n)\|^2 \displaybreak[0]\\  
& \quad \, - 2\eta_{i_n} \l z_n - \hat z_n, A_{i_n}^* A_{i_n} (x_n - \hat x_n)\r \displaybreak[0]\\
& = \|z_n - \hat z_n\|^2 + \eta_{i_n}^2 \|A_{i_n}^*A_{i_n}(x_n - \hat x_n)\|^2  \displaybreak[0]\\
& \quad \, -2\eta_{i_n} \l (z_n - \hat z_n) - (x_n - \hat x_n), A_{i_n}^*A_{i_n} (x_n - \hat x_n)\r \displaybreak[0]\\
& \quad \, -2\eta_{i_n} \l x_n - \hat x_n, A_{i_n}^* A_{i_n}(x_n - \hat x_n)\r.
\end{align*}
Using the second equation in (\ref{IMA03}) and the polarization identity (\ref{polar}) we further have 
\begin{align*}
\|z_{n+1} - \hat z_{n+1}\|^2 
& \le \|z_n - \hat z_n\|^2 + \eta_{i_n}^2 \|A_{i_n}\|^2 \|A_{i_n} (x_n - \hat x_n)\|^2  \displaybreak[0]\\
& \quad \, -2n\eta_{i_n} \l A_{i_n}(x_n - \hat x_n) - A_{i_n}(x_{n-1} - \hat x_{n-1}), A_{i_n}(x_n - \hat x_n)\r \displaybreak[0]\\
& \quad \, -2\eta_{i_n} \|A_{i_n} (x_n - \hat x_n)\|^2 \displaybreak[0]\\
& = \|z_n - \hat z_n\|^2 + \eta_{i_n}^2 \|A_{i_n}\|^2 \|A_{i_n}(x_n - \hat x_n)\|^2  \displaybreak[0]\\
& \quad \, -n \eta_{i_n} \left(\|A_{i_n}(x_n - \hat x_n)\|^2 - \|A_{i_n} (x_{n-1} - \hat x_{n-1})\|^2\right) \displaybreak[0]\\
& \quad \, -n \eta_{i_n} \|A_{i_n}(x_n-\hat x_n) - A_{i_n}(x_{n-1} - \hat x_{n-1})\|^2 \displaybreak[0]\\
& \quad \, -2\eta_{i_n} \|A_{i_n}(x_n - \hat x_n)\|^2  \displaybreak[0]\\
& \le \|z_n - \hat z_n\|^2 + n \eta_{i_n} \|A_{i_n} (x_{n-1} - \hat x_{n-1})\|^2 \displaybreak[0]\\
& \quad \, - (n + 2 - \eta_{i_n}\|A_{i_n}\|^2) \eta_{i_n} \|A_{i_n} (x_n - \hat x_n)\|^2.
\end{align*}
Taking the full expectation gives 
\begin{align*}
\EE[\|z_{n+1} - \hat z_{n+1}\|^2] 
& = \EE[\EE[\|z_{n+1} - \hat z_{n+1}\|^2 | \F_n]] \displaybreak[0] \\
& \le \EE[\|z_n - \hat z_n\|^2] + \frac{n}{p} \sum_{i=1}^p \eta_i \EE[\|A_i (x_{n-1} - \hat x_{n-1})\|^2] \displaybreak[0]\\
& \quad \, - \frac{1}{p} \sum_{i=1}^p (n + 2 - \eta_i\|A_i\|^2) \eta_i \EE[\|A_i (x_n - \hat x_n)\|^2].
\end{align*}
Since $0< \eta_i \le 1/\|A_i\|^2$ for $i = 1, \cdots, p$, we can obtain
\begin{align*}
& \EE[\|z_{n+1} - \hat z_{n+1}\|^2] + \frac{n+1}{p} \sum_{i=1}^p \eta_i \EE[\|A_i (x_n - \hat x_n)\|^2] \displaybreak[0]\\
& \le \EE[\|z_n - \hat z_n\|^2] + \frac{n}{p} \sum_{i=1}^p \eta_i \EE[\|A_i (x_{n-1} - \hat x_{n-1})\|^2]
\end{align*}
for all integers $n\ge 0$. Recursively using this inequality gives
\begin{align*}
\EE[\|z_{n+1} - \hat z_{n+1}\|^2] + \frac{n+1}{p} \sum_{i=1}^p \eta_i \EE[\|A_i (x_n - \hat x_n)\|^2] 
\le \EE[\|z_0 - \hat z_0\|^2] = \|x_0 - \hat x_0\|^2
\end{align*}
which shows (\ref{IMA11}). 

Next we show (\ref{SHB.71}). It is trivial for $n = 0$. Assume 
it is also true for some $n$. By noting from the second equation in (\ref{IMA03}) that 
$x_{n+1} - \hat x_{n+1}$ is a convex combination of $x_n - \hat x_n$ and 
$z_{n+1} - \hat z_{n+1}$, we can obtain 
$$
\|x_{n+1} - \hat x_{n+1}\|^2 \le \frac{n+1}{n+2} \|x_n - \hat x_n\|^2
+ \frac{1}{n+2} \|z_{n+1} - \hat z_{n+1}\|^2
$$
Thus, we may use the induction hypothesis and (\ref{IMA11}) to obtain 
\begin{align*}
& \EE[\|x_{n+1} - \hat x_{n+1}\|^2] \\
& \le \frac{n+1}{n+2} \EE[\|x_n - \hat x_n\|^2]
+ \frac{1}{n+2} \EE[\|z_{n+1} - \hat z_{n+1}\|^2] \le \|x_0 - \hat x_0\|^2. 
\end{align*}
The proof is therefore complete. 
\end{proof}

Based on Lemma \ref{IMA.lem1.2} and Lemma \ref{IMA.lem1.3}, we can now prove the convergence of 
Algorithm \ref{alg:SHB} using exact data. 

\begin{theorem}\label{SHBM.thm2}
Consider the sequence $\{x_n\}$ defined by Algorithm \ref{alg:SHB} using exact data. Assume that 
$0<\eta_i <1/\|A_i\|^2$ for $i = 1, \cdots, p$ and let $x^\dag$ denote the unique $x_0$-minimal 
norm solution of (\ref{sys}). Then 
$$
\lim_{n\to \infty} \EE\left[\|x_n - x^\dag\|^2\right] = 0.
$$
\end{theorem}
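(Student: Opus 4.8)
The plan is to reduce the convergence statement to the two lemmas just proved, exactly in the spirit of the perturbation argument from \cite{Jin2010,Jin2011}. Fix $\ep>0$. Since $x^\dag$ is the $x_0$-minimal norm solution, we have $x^\dag-x_0\in\mbox{Null}(A)^\perp=\overline{\mbox{Ran}(A^*)}$, so we may pick $\hat x_0\in X$ with $x^\dag-\hat x_0\in\mbox{Ran}(A^*)$ and $\|x_0-\hat x_0\|^2$ as small as we wish, say $\|x_0-\hat x_0\|^2<\ep$. Let $\{\hat x_n\}$ be the sequence defined by (\ref{SHB3}) driven by the \emph{same} sample path $\{i_n\}$ as $\{x_n\}$, so that Lemma \ref{IMA.lem1.3} applies and gives $\EE[\|x_n-\hat x_n\|^2]\le\|x_0-\hat x_0\|^2<\ep$ for all $n\ge 0$.

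Next I would split $\|x_n-x^\dag\|^2\le 2\|x_n-\hat x_n\|^2+2\|\hat x_n-x^\dag\|^2$ and take expectations, giving
\begin{align*}
\EE[\|x_n-x^\dag\|^2]\le 2\EE[\|x_n-\hat x_n\|^2]+2\EE[\|\hat x_n-x^\dag\|^2]\le 2\ep+\frac{2p\hat M_0}{c_0(n+1)},
\end{align*}
where the last bound on the second term is Lemma \ref{IMA.lem1.2} (note $\hat x_0$ is chosen so $x^\dag-\hat x_0=A^*\hat\la^\dag\in\mbox{Ran}(A^*)$, so that lemma is directly applicable, with $\hat M_0$ depending on $\hat x_0$). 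Hence $\limsup_{n\to\infty}\EE[\|x_n-x^\dag\|^2]\le 2\ep$. Since $\ep>0$ was arbitrary, $\lim_{n\to\infty}\EE[\|x_n-x^\dag\|^2]=0$, which is the assertion of Theorem \ref{SHBM.thm2}.

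The only subtlety worth flagging is that $\hat M_0$ depends on the choice of $\hat x_0$ (indeed it blows up as $\hat x_0\to x_0$ since $\|\hat\la_i^\dag\|$ need not stay bounded), so one must first fix $\ep$, then fix $\hat x_0$ and hence $\hat M_0$, and only then send $n\to\infty$; the order of quantifiers is what makes the argument work. There is no real obstacle here — all the analytic work has been front-loaded into Lemmas \ref{IMA.lem1.2} and \ref{IMA.lem1.3}, and the theorem is a clean two-line combination of them with the density of $\mbox{Ran}(A^*)$ in $\mbox{Null}(A)^\perp$. (Theorem \ref{SHB:thm4} then follows by feeding this into the stability estimate of Lemma \ref{IMA.lem1.1}: choosing $n_\d\to\infty$ with $\d^2 n_\d\to 0$ makes both $\EE[\|x_{n_\d}^\d-x_{n_\d}\|^2]$ and $\EE[\|x_{n_\d}-x^\dag\|^2]$ vanish, via (\ref{SHB.231})-type splitting.)
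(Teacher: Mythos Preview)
Your proof is correct and follows essentially the same route as the paper: fix $\ep>0$, use density of $\mbox{Ran}(A^*)$ in $\mbox{Null}(A)^\perp$ to pick $\hat x_0$, then combine Lemma \ref{IMA.lem1.3} and Lemma \ref{IMA.lem1.2} via the triangle-type splitting to bound $\limsup_n \EE[\|x_n-x^\dag\|^2]$ by a multiple of $\ep$. Your explicit remarks that $\{\hat x_n\}$ must be driven by the same sample path as $\{x_n\}$ and that $\hat M_0$ depends on $\hat x_0$ (so the order of quantifiers matters) are both correct and make the argument slightly more transparent than the paper's presentation.
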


\begin{proof}
Since $x^\dag - x_0 \in \overline{\mbox{Ran}(A^*)}$, for any $\ep>0$ we 
can find $\hat x_0 \in X$ such that $\|x_0-\hat x_0\| < \ep$ and 
$x^\dag - \hat x_0 \in \mbox{Ran}(A^*)$. Define $\{\hat x_n\}$ as in (\ref{SHB3}). Then we have from Lemma \ref{IMA.lem1.3} that 
$$
\EE\left[\|x_n - \hat x_n\|^2\right] \le \|x_0 - \hat x_0\|^2 < \ep^2. 
$$
Moreover, it follows from Lemma \ref{IMA.lem1.2} that 
$$
\EE\left[\|\hat x_n - x^\dag\|^2\right] \le C (n+1)^{-1}
$$
for some constant $C$ which may depend on $\ep$ but is independent of $n$. Consequently
\begin{align*}
\EE[\|x_n - x^\dag\|^2] 
& \le \EE\left[(\|x_n - \hat x_n\| + \|\hat x_n - x^\dag\|)^2\right] \\
& \le 2\EE\left[\|x_n - \hat x_n\|^2 + \|\hat x_n - x^\dag\|^2\right] \\
& \le 2 \ep^2 + 2C (n+1)^{-1}.  
\end{align*}
Therefore 
$$
\limsup_{n\to \infty} \EE\left[\|x_n - x^\dag\|^2\right] \le 2\ep^2. 
$$
Since $\ep>0$ is arbitrary, we must have $\EE\left[\|x_n - x^\dag\|^2\right] \to 0$ 
as $n \to \infty$. 
\end{proof}

By using Theorem \ref{SHBM.thm2} and Lemma \ref{IMA.lem1.1} we are now ready to prove Theorem \ref{SHB:thm4}, the main convergence result on Algorithm \ref{alg:SHB}. 

\begin{proof}[Proof of Theorem \ref{SHB:thm4}]
According to (\ref{SHB.231}) we have 
\begin{align*}
\EE\left[\|x_{n_\d}^\d - x^\dag\|^2\right] 
& \le 2 \EE\left[\|x_{n_\d}^\d - x_{n_\d}\|^2\right] 
+ 2 \EE\left[\|x_{n_\d} - x^\dag\|^2\right].
\end{align*}
Since $n_\d\to \infty$, we may use Theorem \ref{SHBM.thm2} to obtain 
$$
\EE\left[\|x_{n_\d} - x^\dag\|^2\right] \to 0 \quad \mbox{ as } \d \to 0. 
$$
By using Lemma \ref{IMA.lem1.1} and $\d^2 n_\d \to 0$, we also have 
$$
\EE\left[\|x_{n_\d}^\d - x_{n_\d}\|^2\right]
\le \frac{\bar \eta}{c_0 p} \d^2 n_\d \to 0 \quad \mbox{ as }\d\to 0. 
$$
Therefore $\EE[\|x_{n_\d}^\d - x^\dag\|^2] \to 0$ as $\d \to 0$. 
\end{proof}

Due to the ill-posedness of the underlying problems, any iterative regularization method
using noisy data, include Algorithm \ref{alg:SHB}, exhibits the semi-convergence property.
Therefore, a proper termination of the iteration is crucial for a satisfactory reconstruction. 
In Theorem \ref{SHBM.thm1} and Theorem \ref{SHBM.thm2} we provided convergence analysis of 
Algorithm \ref{alg:SHB} under {\it a priori} choices of the stopping index, where $\eta_{i_n}$ 
is chosen by
\begin{equation}\label{noDP}
    \eta_{i_n} = \mu_0/\|A_{i_n}\|^2, \quad 0<\mu_0<1;
\end{equation}
In particular, Theorem \ref{SHBM.thm1} indicates that the choice of the stopping 
index depends crucially on the information of the source condition which is usually unknown. 
A wrong guess of the source condition may lead to a bad choice of the stopping index and hence 
a bad reconstruction result. For deterministic iterative regularization methods, {\it a posteriori} 
stopping rules have been widely used to resolve this issue and the discrepancy principle is the 
most prominent one (\cite{EHN1996}). In \cite{JLZ2023} the discrepancy principle has been 
successfully incorporated into the stochastic mirror descent method, including the stochastic 
gradient descent method as a special case, to remove the semi-convergence property. It is natural 
to expect that this can also been done for Algorithm \ref{alg:SHB}. Assuming the noise levels 
$\d_i$, $i = 1, \cdots, p$, are known, we may incorporate the discrepancy principle into 
Algorithm \ref{alg:SHB} by taking $\eta_{i_n}$ as follows:
\begin{align}\label{DP}
\eta_{i_n} = \left\{\begin{array}{lll}
\mu_0/\|A_{i_n}\|^2 & \mbox{ if } \|A_{i_n} x_n^\d - y_{i_n}^\d\|>\tau \d_{i_n}, \\
0 & \mbox{ otherwise},
\end{array} \right.
\end{align}
where $0<\mu_0<1$ and $\tau>1$ are user specified constants. Although no convergence theory is 
available for Algorithm \ref{alg:SHB} with $\eta_{i_n}$ chosen by (\ref{DP}), we will provide 
numerical simulations to demonstrate that this procedure indeed can reduce the effect of semi-convergence 
property.

\section{\bf Extension}\label{sect3}

In the previous section we have considered the stochastic heavy ball method for solving 
linear ill-posed system (\ref{sys}) in Hilbert spaces and demonstrated the convergence of the 
iterates to the unique $x_0$-minimal norm solution. In many applications, the sought solution 
may not sit in a Hilbert space and it may have some special feature to be recovered. 
Algorithm \ref{alg:SHB} does not have the capability to deal with these issues. In this section 
we will extend Algorithm \ref{alg:SHB} to cover these situations. Consider again the linear system 
(\ref{sys}) but with each $A_i: X \to Y_i$ being a bounded linear operator from a fixed Banach 
spaces $X$ to a Hilbert space $Y_i$. We introduce a proper, lower semi-continuous, 
strongly convex function $\R: X \to (-\infty, \infty]$ with the aim of capturing the feature of the 
sought solution. Here $\R$ is called $\mu$-strongly convex for some constant $\mu>0$ if  
$$
\R(t \bar x + (1-t) x) + \mu t (1-t)\|\bar x - x\|^2 \le t \R(\bar x) + (1-t) \R(x)
$$
for all $\bar x, x \in \mbox{dom}(\R)$ and $0\le t\le 1$, where $\mbox{dom}(\R):=\{x\in X: \R(x) < \infty\}$
is called the effective domain of $\R$. We then consider finding a solution $x^\dag$ of (\ref{sys}) 
such that 
\begin{align}\label{SHB.B1}
\R(x^\dag) = \min\left\{\R(x): A_i x = y_i, \ i = 1, \cdots, p\right\}
\end{align}
which is called a $\R$-minimizing solution of (\ref{sys}). Assuming (\ref{sys}) has a solution in 
$\mbox{dom}(\R)$, by a standard argument it is easy to see that the $\R$-minimizing solution 
exists and is unique. We will determine the unique $\R$-minimizing solution $x^\dag$ approximately 
by extending the stochastic heavy ball method. To this end, we consider the dual problem 
of (\ref{SHB.B1}) which is given by 
\begin{align}\label{dual}
\min_{\la \in Y} \left\{\R^*(A^* \la) - \l \la, y\r\right\},
\end{align}
where $\R^*: X^* \to (-\infty, \infty]$ denotes the convex conjugate of $\R$, i.e. 
$$
\R^*(\xi) := \sup_{x\in X} \left\{\l \xi, x\r - \R(x)\right\}, \quad \forall \xi \in X^*. 
$$
Since $\R$ is strongly convex, $\R^*$ is continuous differentiable, its gradient 
$\nabla \R^*$ maps $X^*$ to $X$ and 
$$
\|\nabla \R^*(\bar \xi) - \nabla \R^*(\xi)\| \le \frac{\|\bar \xi- \xi\|}{2 \mu}, \quad 
\forall \bar \xi, \xi\in X^*; 
$$
see \cite[Corollary 3.5.11]{Z2002}. Applying the heavy ball method (\ref{HB0}) to solve 
(\ref{dual}) gives 
\begin{align*}
\la_{n+1} = \la_n - \a_n \eta (A \nabla \R^*(A^* \la_n) - y) + \beta_n (\la_n - \la_{n-1}),
\end{align*}
where $\eta>0$ is a suitable constant. 
By setting $\xi_n := A^* \la_n$ and $x_n = \nabla \R^*(A^* \la_n)$, we then obtain 
\begin{align*}
x_n = \nabla \R^*(\xi_n), \quad 
\xi_{n+1} = \xi_n - \a_n \eta A^* (A x_n - y) + \beta_n (\xi_n - \xi_{n-1}).
\end{align*}
In case only noisy data $y^\d$ is available, we may replace $y$ by $y^\d$ in the above equation 
to obtain the method
\begin{align*}
x_n^\d = \nabla \R^*(\xi_n^\d), \quad 
\xi_{n+1}^\d = \xi_n^\d - \a_n \eta A^* (A x_n^\d - y^\d) + \beta_n (\xi_n^\d - \xi_{n-1}^\d).
\end{align*}
Note that 
\begin{align}\label{SHB.B2}
A^*(A x_n^\d - y^\d) = \sum_{i=1}^p A_i^* (A_i x_n^\d - y_i^\d).
\end{align}
Therefore, the implementation of the above method requires to calculate $A_i^*(A_i x_n^\d - y_i^\d)$
for all $i = 1, \cdots, p$ at each iteration step; in case $p$ is huge, this requires a huge amount of 
computational work. In order to reduce the computational load per iteration, we may consider using 
a randomly selected term from the right hand side of (\ref{SHB.B2}) and then carry out the iteration step. 
Repeating this procedure leads to the following stochastic method for solving (\ref{SHB.B1}) in 
Banach spaces. 

\begin{algorithm}\label{alg:SHB2}
Let $\xi_{-1}^\d = \xi_0^\d = 0 \in X^*$ and take suitable numbers $\eta_i$ for $i = 1, \cdots, p$. 
For $n \ge 0$ do the following:

\begin{enumerate}[leftmargin = 0.8cm]
\item[\emph{(i)}] Calculate $x_n^\d$ by 
$$
x_n^\d = \nabla \R^*(\xi_n^\d) = \arg\min\{\R(x) - \l \xi_n^\d, x\r\}; 
$$

\item[\emph{(ii)}] Pick an index $i_n\in \{1, \cdots, p\}$ randomly via the uniform distribution;

\item[\emph{(iii)}] Update $\xi_{n+1}^\d$ by 
\begin{align*}
\xi_{n+1}^\d & = \xi_n^\d - \a_n \eta_{i_n} A_{i_n}^*(A_{i_n} x_n^\d - y_{i_n}^\d) 
+ \beta_n (\xi_n^\d - \xi_{n-1}^\d)
\end{align*}
with $\a_n$ and $\beta_n$ given by (\ref{ab}). 
\end{enumerate}
\end{algorithm}

According to the definition of $x_n^\d$ and the subdifferential calculus, we have 
$\xi_n^\d \in \p \R(x_n^\d)$, where $\p \R$ denotes the subdifferential of $\R$, i.e. 
$$
\p\R(x) := \{\xi \in X^*: \R(\bar x) \ge \R(x) + \l \xi, \bar x -x\r \mbox{ for all } \bar x \in X\}, 
\quad \forall x \in X.
$$
In order to analyze the method, we use the Bregman distance induced by $\R$. Given $x\in X$ 
with $\p \R(x) \ne \emptyset$ and $\xi \in \p \R(x)$ we define 
$$
D_\R^\xi(\bar x, x) := \R(\bar x) - \R(x) - \l \xi, \bar x - x\r, \quad \forall \bar x \in X
$$
which is called the Bregman distance induced by $\R$ at $x$ in the direction $\xi$. 

In the following we derive an estimate on $\EE[\|x_n^\d - x^\dag\|^2]$ under the 
benchmark source condition 
\begin{align}\label{SC2}
\xi^\dag:= A^* \la^\dag \in \p \R(x^\dag)
\end{align}
on the sought solution $x^\dag$, where 
$\la^\dag := (\la_1^\dag, \cdots, \la_p^\dag) \in Y:= Y_1\times \cdots \times Y_p$.
We need the following elementary result (see \cite{JLZ2023}). 

\begin{lemma}\label{rbdgm.lem4}
Let $\{a_n\}$ and $\{b_n\}$ be two sequences of nonnegative numbers such that
$$
a_n^2 \le b_n^2 + c \sum_{j=0}^{n-1} a_j, \quad n=0, 1,\cdots,
$$
where $c \ge 0$ is a constant. If $\{b_n\}$ is non-decreasing, then
$$
a_n \le b_n + c n, \quad n=0, 1, \cdots.
$$
\end{lemma}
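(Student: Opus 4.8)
The plan is to prove Lemma~\ref{rbdgm.lem4} by a strong-induction argument on $n$, treating the inequality $a_n^2 \le b_n^2 + c\sum_{j=0}^{n-1} a_j$ as a recursive bound and substituting the inductive estimates for the earlier $a_j$. For the base case $n=0$ the hypothesis reads $a_0^2 \le b_0^2$, hence $a_0 \le b_0$, which is the asserted bound with $n=0$. For the inductive step, assume $a_j \le b_j + cj$ has been established for all $j \le n-1$; since $\{b_n\}$ is non-decreasing we have $b_j \le b_n$ for these indices, so
\begin{align*}
\sum_{j=0}^{n-1} a_j \le \sum_{j=0}^{n-1} (b_j + cj) \le n b_n + c\, \frac{n(n-1)}{2}.
\end{align*}
Plugging this into the hypothesis gives $a_n^2 \le b_n^2 + c n b_n + \tfrac{1}{2} c^2 n(n-1)$.

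The remaining task is to show this upper bound for $a_n^2$ is at most $(b_n + cn)^2 = b_n^2 + 2cn b_n + c^2 n^2$. Subtracting, it suffices to verify
\begin{align*}
c n b_n + \tfrac{1}{2} c^2 n(n-1) \le 2 c n b_n + c^2 n^2,
\end{align*}
i.e. $0 \le c n b_n + c^2 n^2 + \tfrac{1}{2} c^2 n$, which holds trivially since $c \ge 0$, $b_n \ge 0$, and $n \ge 0$. Therefore $a_n^2 \le (b_n + cn)^2$, and taking square roots (all quantities being nonnegative) yields $a_n \le b_n + cn$, completing the induction.

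I do not expect any genuine obstacle here: the statement is an elementary discrete comparison lemma, and the only mild care needed is to use the monotonicity of $\{b_n\}$ to replace $b_j$ by $b_n$ inside the sum before bounding $\sum_{j=0}^{n-1} j = n(n-1)/2$. One alternative, slightly slicker route would be to prove by induction the single combined estimate directly, but the two-step presentation above (bound the sum, then complete the square) is the cleanest and is what I would write up.
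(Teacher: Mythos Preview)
Your proof is correct. The strong-induction argument works exactly as you outline: the base case is immediate from the empty sum, and in the inductive step the monotonicity of $\{b_n\}$ lets you bound $\sum_{j=0}^{n-1} a_j \le n b_n + \tfrac{c}{2}n(n-1)$, after which the comparison with $(b_n+cn)^2$ is straightforward. There is a harmless arithmetic slip in your final simplification: the difference $2cnb_n + c^2n^2 - cnb_n - \tfrac{1}{2}c^2n(n-1)$ equals $cnb_n + \tfrac{1}{2}c^2n^2 + \tfrac{1}{2}c^2n$, not $cnb_n + c^2n^2 + \tfrac{1}{2}c^2n$; but since all terms are nonnegative this does not affect the conclusion.

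As for comparison with the paper: the paper does not supply its own proof of this lemma. It states the result as an elementary fact and refers the reader to \cite{JLZ2023}. So your argument is not competing with anything in the text; it simply fills in a proof the authors chose to cite rather than include.
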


\begin{theorem}
Consider Algorithm \ref{alg:SHB2} and assume that $0<\eta_i < 2\mu/\|A\|^2$ for $i = 1, \cdots, p$. 
If the unique $\R$-minimizing solution $x^\dag$ satisfies the source condition (\ref{SC2}), 
then there exists a positive constant $C$ depending only on $\mu$, $\|A\|^2$ and $\eta_i$ such that  
$$
\EE[\|x_n^\d - x^\dag\|^2] \le C \left(\frac{pM_0}{n+1} + \frac{1}{p}(n+1) \d^2 + \d^2\right),
$$
where $M_0:= \sum_{i=1}^p \frac{1}{\eta_i} \|\la_i^\dag\|^2$. Consequently, if the integer $n_\d$ is 
chosen such that $(n_\d + 1)/p \sim \d^{-1}$, then 
$$
\EE[\|x_{n_\d}^\d - x^\dag\|^2] = O(\d). 
$$
\end{theorem}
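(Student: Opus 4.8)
The plan is to mirror the Hilbert-space argument from Section 2, but replace norm-squared quantities by Bregman distances induced by $\R$ and exploit $\mu$-strong convexity to convert Bregman distances back into squared norms at the end. First I would split, as in (\ref{SHB.231}), into the stability part $\EE[\|x_n^\d - x^\dag\|^2]$ contributed by replacing $y$ with $y^\d$ and the exact-data part. For the exact-data sequence $\{x_n,\xi_n\}$ (Algorithm \ref{alg:SHB2} with $y$ in place of $y^\d$), I would carry over the iterate moving-average viewpoint: set $\zeta_{n+1} = \zeta_n - \eta_{i_n} A_{i_n}^*(A_{i_n} x_n - y_{i_n})$ and $\xi_{n+1} = \frac{n+1}{n+2}\xi_n + \frac{1}{n+2}\zeta_{n+1}$, and introduce $u_n := \la_n + n(\la_n - \la_{n-1})$ exactly as in Lemma \ref{SHBM:lem1}. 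The key algebraic identity $u_{n+1,i} = u_{n,i}$ for $i\ne i_n$ and $u_{n+1,i_n} = u_{n,i_n} - \eta_{i_n}(A_{i_n}x_n - y_{i_n})$ still holds since it only used the form of $\a_n,\beta_n$. Then $r_{n+1} = r_n + \eta_{i_n}\|A_{i_n}x_n - y_{i_n}\|^2 - 2\l A_{i_n}^*(u_{n,i_n} - \la_{i_n}^\dag), x_n - x^\dag\r$ as before, and the crucial point is that $\l A^*(u_n - \la^\dag), x_n - x^\dag\r = \l \xi^\dag + n(\xi_n - \xi_{n-1}) + (\xi_n - \xi^\dag) - \text{correction}, x_n - x^\dag\r$; using $\xi_n \in \p\R(x_n)$, $\xi^\dag \in \p\R(x^\dag)$ and the three-point identity for Bregman distances, the term $\l \xi_n - \xi^\dag, x_n - x^\dag\r$ is bounded below by $D_\R^{\xi_n}(x^\dag,x_n) + D_\R^{\xi^\dag}(x_n,x^\dag) \ge 2\mu\|x_n - x^\dag\|^2$, which is the analogue of the $-\frac{2}{p}\|x_n - x^\dag\|^2$ term in Lemma \ref{SHBM:lem1}.

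Next I would establish the analogue of Lemma \ref{SHBM:lem2} to cancel the residual sum: using the polarization identity on $\|\zeta_{n+1} - \xi^\dag\|^2$ together with $\|A_{i_n}^* v\|^2 \le \|A\|^2 \|v\|^2$ and $\|\nabla\R^*(\bar\xi) - \nabla\R^*(\xi)\| \le \|\bar\xi - \xi\|/(2\mu)$, I would derive a recursion of the form $\EE[\|\zeta_{n+1} - \xi^\dag\|^2 + \frac{n+1}{p}\sum_i \eta_i\|A_i x_n - y_i\|^2] \le \EE[\|\zeta_n - \xi^\dag\|^2 + \frac{n}{p}\sum_i\eta_i\|A_i x_{n-1} - y_i\|^2] - \frac{c}{p}\sum_i\eta_i\EE[\|A_i x_n - y_i\|^2]$ for a positive constant $c$ depending on $\mu$ and $\|A\|^2$ (this is where the hypothesis $\eta_i < 2\mu/\|A\|^2$ enters, guaranteeing $n+2 - \eta_i\|A\|^2/(2\mu)\cdot(\text{something}) \ge n+1+c$). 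Adding a suitable multiple of this to the $r_n$ recursion, the residual sums telescope away and one obtains, after recursion from $n=0$ (where $\zeta_0 = \xi_0 = 0$, $\la_0 = 0$, $r_0 = \sum_i \frac{1}{\eta_i}\|\la_i^\dag\|^2 = M_0$), a bound of the form $\frac{(n+1)\cdot 2\mu}{p}\EE[\|x_n - x^\dag\|^2] \le C_1 M_0$, i.e. $\EE[\|x_n - x^\dag\|^2] \le \frac{pM_0}{C(n+1)}$, the analogue of Lemma \ref{SHBM.thm1}. The stability estimate $\EE[\|x_n^\d - x_n\|^2] \le C\left(\frac{1}{p}(n+1)\d^2 + \d^2\right)$ would be proved by the same moving-average computation as in Lemma \ref{IMA.lem1.1}, again invoking the Lipschitz bound on $\nabla\R^*$ to pass between $\|\xi_n^\d - \xi_n\|$ and $\|x_n^\d - x_n\|$; here the extra $+\d^2$ term (absent in the Hilbert case) appears because $\nabla\R^*(0) = x_0$ need not match and the strong-convexity inequality $2\mu D$ introduces a slack, or because an additional Cauchy--Schwarz split of the noise cross-term produces an $\eta_i\d_i^2$ remainder that does not carry a vanishing prefactor. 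Combining the two estimates via (\ref{SHB.231})-type splitting and choosing $(n_\d+1)/p \sim \d^{-1}$ balances $\frac{pM_0}{n_\d+1} \sim \d$ against $\frac{1}{p}(n_\d+1)\d^2 \sim \d$, giving $\EE[\|x_{n_\d}^\d - x^\dag\|^2] = O(\d)$.

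The main obstacle I anticipate is handling the Bregman-distance bookkeeping cleanly: in the Hilbert case one directly has $\l x_n - x^\dag, x_n - x^\dag\r = \|x_n - x^\dag\|^2$, whereas here the inner product $\l \xi_n - \xi^\dag, x_n - x^\dag\r$ only controls a symmetrized Bregman distance, and one must keep track of both $D_\R^{\xi_n}(x^\dag, x_n)$ and $D_\R^{\xi^\dag}(x_n, x^\dag)$ without losing the telescoping structure — in particular, the term $\|\zeta_n - \xi^\dag\|^2$ is a squared norm in the dual space $X^*$ and is not itself a Bregman distance, so the two recursions live in slightly different worlds and must be glued carefully. A secondary technical point is verifying that the momentum correction term $-\beta_n \cdot (\text{stuff})$ that distinguishes $u_{n+1}$ from a plain sum still collapses correctly when only one coordinate is updated; this is routine but needs the precise cancellation $\a_n + \beta_n\cdot 0 = \a_n$ and $1 + \beta_n \cdot \frac{n-1+1}{\cdots}$ type identities with $\a_n = 1/(n+2)$, $\beta_n = n/(n+2)$. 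Once these are in place the estimates follow the template of Lemmas \ref{IMA.lem1.1}, \ref{SHBM:lem1}, \ref{SHBM:lem2} and \ref{SHBM.thm1} essentially verbatim, with constants now depending on $\mu$ and $\|A\|^2$; Lemma \ref{rbdgm.lem4} may be invoked if the stability recursion for $\EE[\|\xi_n^\d - \xi_n\|^2]$ takes the quadratic-in-$a_n$ form rather than a clean telescoping form.
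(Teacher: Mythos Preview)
Your overall plan differs substantially from the paper's proof, and one key step you propose does not go through as stated.

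\textbf{Where the approaches diverge.} The paper does \emph{not} split into an exact-data estimate plus a stability estimate as in Section~\ref{sect2}. Instead it works directly with the noisy sequence $\{x_n^\d,\xi_n^\d\}$: defining $u_n^\d$ and $r_n^\d$ as in Lemma~\ref{SHBM:lem1} (with $\d$-superscripts throughout), it derives a recursion for $\EE[r_{n+1}^\d]$ that now carries an extra noise term $\frac{2\bar\eta^{1/2}}{p}\d\,(\EE[r_n^\d])^{1/2}$. Using $A^*(u_n^\d-\la^\dag)=\xi_n^\d-\xi^\dag+n(\xi_n^\d-\xi_{n-1}^\d)$ and the Bregman three-point identities (exactly as you anticipated), it arrives at a recursion for $\EE[r_{n+1}^\d+\frac{2(n+1)}{p}\Delta_n^\d]$ where $\Delta_n^\d=D_\R^{\xi_n^\d}(x^\dag,x_n^\d)$. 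Because of the square-root noise term, this recursion is \emph{not} a clean telescope; the paper invokes Lemma~\ref{rbdgm.lem4} in an essential way to close it. So Lemma~\ref{rbdgm.lem4} is central, not an afterthought for the stability part.

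\textbf{The gap in your Lemma~\ref{SHBM:lem2} analogue.} You propose to cancel the residual sum $\frac{1}{p}\sum_i\eta_i\EE[\|A_ix_n-y_i\|^2]$ via a $\zeta$-recursion, applying ``the polarization identity on $\|\zeta_{n+1}-\xi^\dag\|^2$''. But $\zeta_n,\xi^\dag\in X^*$, and $X$ is an arbitrary Banach space, so $X^*$ carries no inner product and no polarization identity is available. Even tracking the $u$-variables in $Y$ (which \emph{is} Hilbert) does not help: the Hilbert-space proof of Lemma~\ref{SHBM:lem2} used $A_{i_n}(z_n-x^\dag)=A_{i_n}(x_n-x^\dag)+nA_{i_n}(x_n-x_{n-1})$, a linear relation that has no analogue here because $x_n=\nabla\R^*(A^*\la_n)$ depends nonlinearly on $\la_n$. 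This is also where you mislocate the role of the hypothesis $\eta_i<2\mu/\|A\|^2$.

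\textbf{How the paper actually handles the residual.} No Lemma~\ref{SHBM:lem2} analogue is needed. The paper bounds the residual sum \emph{directly}:
\[
\sum_{i=1}^p\eta_i\|A_ix_n^\d-y_i^\d\|^2\le(1+\ep)\bar\eta\|A\|^2\|x_n^\d-x^\dag\|^2+(1+\ep^{-1})\bar\eta\,\d^2,
\]
and pairs this against $D_\R^{\xi^\dag}(x_n^\d,x^\dag)\ge\mu\|x_n^\d-x^\dag\|^2$ from strong convexity. Choosing $\ep=2\mu/(\bar\eta\|A\|^2)-1>0$ (here is where $\bar\eta<2\mu/\|A\|^2$ enters) makes the $\|x_n^\d-x^\dag\|^2$ terms cancel, leaving only a $\d^2/p$ contribution per step plus the square-root term to be handled by Lemma~\ref{rbdgm.lem4}. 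Your Bregman-distance bookkeeping for the $\l\xi_n-\xi^\dag,x_n-x^\dag\r$ and $\l\xi_n-\xi_{n-1},x_n-x^\dag\r$ terms is correct and matches the paper; it is the cancellation mechanism for the residual that you should rethink.
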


\begin{proof}
It is easy to see that $\{x_n^\d\}$ and $\{\xi_n^\d\}$ from Algorithm \ref{alg:SHB2} 
can be equivalently written as 
$$
x_n^\d = \nabla \R^*(\xi_n^\d), \qquad \xi_{n+1}^\d = A^* \la_{n+1}^\d,
$$
where $\la_{n+1}^\d := (\la_{n+1, 1}^\d, \cdots, \la_{n+1,p}^\d) \in Y$ is defined by 
\begin{align*}
\la_{n+1, i}^\d = \left\{\begin{array}{lll}
\la_{n, i}^\d + \beta_n (\la_{n, i}^\d - \la_{n-1, i}^\d) & \mbox{if } i \ne i_n,\\[1ex]
\la_{n, i_n}^\d + \beta_n (\la_{n, i_n}^\d - \la_{n-1, i_n})^\d
- \a_n \eta_{i_n}(A_{i_n} x_n^\d - y_{i_n}^\d) & \mbox{if } i = i_n
\end{array}\right.
\end{align*}
with $\la_{-1} = \la_0 =0$. As in Lemma \ref{SHBM:lem1} we define  
$$
u_n^\d := \la_n^\d + n(\la_n^\d - \la_{n-1}^\d) \quad \mbox{and} \quad 
r_n^\d := \sum_{i=1}^n \frac{1}{\eta_i} \|u_{n, i}^\d - \la_i^\dag\|^2
$$
for all integers $n \ge 0$, where $u_{n,i}^\d$ denotes the $i$-th component of $u_n^\d$ in $Y_i$.
Following the proof of Lemma \ref{SHBM:lem1} we can obtain 
\begin{align*}
\EE[r_{n+1}^\d]  
& \le \EE[r_n^\d] + \frac{1}{p} \sum_{i=1}^p \eta_i \EE[\|A_i x_n^\d - y_i^\d\|^2] 
- \frac{2}{p} \EE\left[\left\l A^*(u_n^\d - \la^\dag), x_n^\d - x^\dag\right\r\right] \\
& \quad \, + \frac{2\bar \eta^{1/2}}{p} \d  
\left(\EE[r_n^\d]\right)^{1/2}.
\end{align*}
By the definition of $u_n^\d$ and $\xi_n^\d$, we have $A^* (u_n^\d - \la^\dag) = \xi_n^\d - \xi^\dag 
+ n(\xi_n^\d - \xi_{n-1}^\d)$. Consequently 
\begin{align*}
\EE[r_{n+1}^\d]  
& \le \EE[r_n^\d] + \frac{1}{p} \sum_{i=1}^p \eta_i \EE[\|A_i x_n^\d - y_i^\d\|^2] 
- \frac{2}{p} \EE\left[\left\l \xi_n^\d - \xi^\dag, x_n^\d - x^\dag\right\r\right] \\
& \quad \, - \frac{2n}{p} \EE\left[\l \xi_n^\d - \xi_{n-1}^\d, x_n^\d - x^\dag\r\right] 
+ \frac{2\bar \eta^{1/2}}{p} \d  \left(\EE[r_n^\d]\right)^{1/2}.
\end{align*}
By straightforward calculation we can see that 
$$
\l \xi_n^\d - \xi^\dag, x_n^\d - x^\dag\r = D_\R^{\xi_n^\d} (x^\dag, x_n^\d) 
+ D_\R^{\xi^\dag}(x_n^\d, x^\dag) 
$$
and
\begin{align*}
\l \xi_n^\d - \xi_{n-1}^\d, x_n^\d - x^\dag\r 
& = D_\R^{\xi_n^\d} (x^\dag, x_n^\d) - D_\R^{\xi_{n-1}^\d} (x^\dag, x_{n-1}^\d) 
+ D_\R^{\xi_{n-1}^\d} (x_n^\d, x_{n-1}^\d)\\
& \ge D_\R^{\xi_n^\d} (x^\dag, x_n^\d) - D_\R^{\xi_{n-1}^\d} (x^\dag, x_{n-1}^\d).
\end{align*}
Therefore 
\begin{align*}
\EE[r_{n+1}^\d]  
& \le \EE[r_n^\d] + \frac{1}{p} \sum_{i=1}^p \eta_i \EE[\|A_i x_n^\d - y_i^\d\|^2] 
- \frac{2}{p} \EE\left[D_\R^{\xi_n^\d} (x^\dag, x_n^\d) 
+ D_\R^{\xi^\dag}(x_n^\d, x^\dag) \right] \\
& \quad \, - \frac{2n}{p} \EE\left[D_\R^{\xi_n^\d} (x^\dag, x_n^\d) 
- D_\R^{\xi_{n-1}^\d} (x^\dag, x_{n-1}^\d)\right] 
+ \frac{2\bar \eta^{1/2}}{p} \d  \left(\EE[r_n^\d]\right)^{1/2}.
\end{align*}
Letting $\Delta_n^\d := D_\R^{\xi_n^\d}(x^\dag, x_n^\d)$ and regrouping the terms, we can obtain  
\begin{align}\label{SHB.B4}
\EE\left[r_{n+1}^\d + \frac{2(n+1)}{p} \Delta_n^\d \right] 
& \le \EE\left[r_n^\d + \frac{2n}{p} \Delta_{n-1}^\d \right] 
+ \frac{1}{p} \EE\left[\sum_{i=1}^p \eta_i \|A_i x_n^\d - y_i^\d\|^2\right] \nonumber \\
& \quad \, - \frac{2}{p} D_\R^{\xi^\dag}(x_n^\d, x^\dag) 
+ \frac{2\bar \eta^{1/2}}{p} \d  \left(\EE[r_n^\d]\right)^{1/2}.
\end{align}
Note that 
\begin{align*}
\sum_{i=1}^p \eta_i \|A_i x_n^\d - y_i^\d\|^2 
& \le \bar\eta \sum_{i=1}^p \left(\|A_i (x_n^\d - x^\dag)\| + \d_i\right)^2 \\
& \le \bar\eta \sum_{i=1}^p \left((1+\ep) \|A_i (x_n^\d - x^\dag)\|^2 + (1+ \ep^{-1}) \d_i^2\right) \\
& = (1+\ep) \bar\eta \|A(x_n^\d - x^\dag)\|^2 + (1+\ep^{-1}) \bar\eta \d^2 \\
& \le (1+\ep) \bar \eta \|A\|^2 \|x_n^\d - x^\dag\|^2 + (1+\ep^{-1}) \bar\eta \d^2,
\end{align*}
where $\ep>0$ is any number. Since $\R$ is $\mu$-strongly convex, we have 
$$
D_\R^{\xi^\dag}(x_n^\d, x^\dag) \ge \mu \|x_n^\d - x^\dag\|^2.
$$
Combining the above two equations with (\ref{SHB.B4}) gives 
\begin{align*}
\EE\left[r_{n+1}^\d + \frac{2(n+1)}{p} \Delta_n^\d \right] 
& \le \EE\left[r_n^\d + \frac{2n}{p} \Delta_{n-1}^\d \right] 
- \frac{1}{p} \left(2\mu - (1+\ep) \bar\eta \|A\|^2\right) \|x_n^\d - x^\dag\|^2 \\
& \quad \, + \frac{1}{p} (1+\ep^{-1}) \bar\eta \d^2 
+ \frac{2\bar \eta^{1/2}}{p} \d  \left(\EE[r_n^\d]\right)^{1/2}.
\end{align*}
According to the given condition on $\eta_i$ we have $\bar \eta < 2\mu/\|A\|^2$. 
We may take $\ep := 2\mu/(\bar\eta\|A\|^2)-1>0$ to obtain 
\begin{align*}
\EE\left[r_{n+1}^\d + \frac{2(n+1)}{p} \Delta_n^\d \right] 
\le \EE\left[r_n^\d + \frac{2n}{p} \Delta_{n-1}^\d \right] + \frac{1}{p} C_0 \d^2 
+ \frac{2\bar \eta^{1/2}}{p} \d  \left(\EE[r_n^\d]\right)^{1/2}, 
\end{align*}
where $C_0:= (1+\ep^{-1}) \bar\eta$. Recursively using this inequality gives 
\begin{align}\label{SHBM.21}
\EE\left[r_{n+1}^\d + \frac{2(n+1)}{p} \Delta_n^\d\right] 
\le M_0 + \frac{(n+1)C_0\d^2}{p} 
+ \frac{2 \bar \eta^{1/2}}{p} \d \sum_{k=0}^n \left(\EE[r_k^\d]\right)^{1/2},
\end{align}
where 
\begin{align*}
M_0 := \EE\left[r_0^\d \right] 
= \sum_{i=1}^p \frac{1}{\eta_i} \|\la_i^\dag\|^2. 
\end{align*}
This in particular implies that 
\begin{align*}
\EE[r_{n+1}^\d] \le M_0 + \frac{(n+1)C_0\d^2}{p}
+ \frac{2\bar \eta^{1/2}}{p} \d \sum_{k=0}^n (\EE[r_k^\d])^{1/2}.
\end{align*}
Thus, by using Lemma \ref{rbdgm.lem4} we can obtain 
\begin{align*}
(\EE[r_k^\d])^{1/2} 
\le \left(M_0 + \frac{C_0 k}{p} \d^2\right)^{1/2} + \frac{2\bar \eta^{1/2}}{p} k \d 
\le M_0^{1/2} + \left(\frac{C_0 k}{p}\right)^{1/2} \d  
+ \frac{2\bar \eta^{1/2}}{p} k \d
\end{align*}
for all integers $k \ge 0$. Consequently 
\begin{align*}
\sum_{k=0}^n (\EE[r_k^\d])^{1/2} 
& \le M_0^{1/2} (n+1) + \left(\frac{C_0}{p}\right)^{1/2} \d \sum_{k=0}^n k^{1/2}   
+ \frac{2\bar \eta^{1/2}}{p} \d \sum_{k=0}^n k \displaybreak[0]\\
& \le M_0^{1/2} (n+1) 
+ \left(\frac{2C_0^{1/2} (n+1)^{3/2}}{3 p^{1/2}} 
+ \frac{\bar\eta^{1/2} (n+1)^2}{p} \right) \d.
\end{align*}
Combining this with (\ref{SHBM.21}) we have 
\begin{align*}
\frac{2(n+1)}{p} \EE[\Delta_n^\d] 
& \le M_0 + \frac{(n+1)C_0 \d^2}{p} + \frac{2 (\bar \eta M_0)^{1/2}}{p} (n+1) \d \displaybreak[0]\\
& \quad \, + 2 \left(\frac{2C_0^{1/2} (n+1)^{3/2}}{3 \bar\eta^{1/2} p^{3/2}} 
+ \frac{(n+1)^2}{p^2} \right) \bar \eta \d^2 \displaybreak[0]\\
& \le 2 M_0 + \frac{(n+1)C_0 \d^2}{p} + \frac{\bar\eta}{p^2} (n+1)^2 \d^2 \displaybreak[0]\\
& \quad \, + 2 \left(\frac{2C_0^{1/2} (n+1)^{3/2}}{3\bar\eta^{1/2} p^{3/2}} 
+ \frac{(n+1)^2}{p^2} \right) \bar \eta \d^2. 
\end{align*}
Therefore, by the strong convexity of $\R$, we have 
\begin{align*}
\mu \EE[\|x_n^\d - x^\dag\|^2] \le \EE[\Delta_n^\d] 
& \le \frac{p M_0}{2(n+1)} + \frac{1}{2} C_0 \d^2 + \frac{\bar\eta}{2p} (n+1) \d^2 \displaybreak[0]\\
& \quad \, + \left(\frac{2C_0^{1/2} (n+1)^{1/2}}{3\bar\eta^{1/2}p^{1/2}}
+ \frac{n+1}{p} \right) \bar \eta \d^2. 
\end{align*}
By using $2C_0^{1/2}(n+1)^{1/2}/(3\bar\eta^{1/2} p^{1/2}) \le C_0/(9 \bar\eta) + (n+1)/p$ 
we further obtain  
\begin{align*}
\mu \EE[\|x_n^\d - x^\dag\|^2] 
& \le \frac{p M_0}{2(n+1)} + \frac{5 \bar\eta}{2p} (n+1) \d^2  + \frac{11}{18} C_0\d^2
\end{align*}
which completes the proof.
\end{proof}

\begin{remark}
Without using any source condition on $x^\dag$, it is expected for Algorithm \ref{alg:SHB2}
that if $n_\d$ is chosen such that $n_\d \to \infty$ and $\d^2 n_\d \to 0$ then 
$\EE[\|x_{n_\d}^\d - x^\dag\|^2] \to 0$ as $\d \to 0$. However, we do not have a proof about 
this statement.
\end{remark}

\begin{remark}
As usual, Algorithm \ref{alg:SHB2} with $\eta_{i_n} = \mu_0/\|A\|^2$ for some $0<\mu_0<2\mu$
demonstrates the semi-convergence property. To suppress such effect, we may incorporate the 
discrepancy principle into the choice of $\eta_{i_n}$, namely, when the noise levels $\d_i$, $i=1, \cdots, p$, are available, we may use $\eta_{i_n}$ chosen by (\ref{DP}) in Algorithm \ref{alg:SHB2}. 
Although there is no theoretical justification, numerical simulations indicate that such 
a choice of $\eta_{i_n}$ indeed can reduce the appearance of semi-convergence significantly.
\end{remark}

\section{\bf Numerical simulations}\label{sect4}

In this section we provide numerical results to demonstrate the performance of the stochastic 
gradient method with heavy-ball momentum. We consider the first kind integral equation
$$
\int_a^b \kappa(s, t) x(t) dt = y(s), \quad s\in [a,b]
$$
with discrete data, where $\kappa \in C([a,b]\times [a,b])$. The data $y_i = y(s_i)$, 
$i = 1, \cdots, p$, are collected at the $p$ sample points $s_1, \cdots, s_p$ in $[a,b]$ with
$s_i = a + (i-1)(b-a)/(p-1)$. Correspondingly  we have the linear system
\begin{align}\label{IES}
A_i x:=\int_a^b \kappa (s_i, t) x(t) dt = y_i,  \quad i = 1, \cdots, p. 
\end{align}

\begin{example}
We illustrate the performance of Algorithm \ref{alg:SHB} by applying it to solve the 
linear system (\ref{IES}) in which $[a,b] = [-6, 6]$ and $\kappa(s,t) = \varphi(s-t)$ with 
$\varphi(s) = \left(1+\cos(\pi s/3)\right) \chi_{\{|s|<3\}}$. Assume the sought solution is 
\begin{align*}
x^\dag(t) = \sin(\pi t/12) + \sin(\pi t/3) + \frac{1}{200} t^2 (1-t).
\end{align*}
Instead of the exact data $y = (y_1, \cdots, y_p)$ with $y_i := A_i x^\dag$, we use the noisy data  
\begin{align}\label{nd}
y_i^\d = y_i + \d_{rel} \|y\|_\infty \ep_i, \quad i = 1, \cdots, p, 
\end{align}
where $\d_{rel}$ is the relative noise level and $\ep_i$ are random noises obeying 
the uniform distribution on $[-1,1]$. We execute Algorithm \ref{alg:SHB} by using the 
initial guess $x_{-1}^\d = x_0^\d= 0$ and setting $\eta_i= 0.6/\left\|A_i\right\|^2$ for 
$i=1,\cdots, p$, which follows \eqref{noDP} with $\mu_0 = 0.6$. We also use noisy data 
with three distinct relative noise levels $\d_{rel} = 10^{-1}, 10^{-2}$ and $10^{-3}$. 
In our implementation, all integrals over $[-6,6]$ are approximated by the trapezoidal 
rule based on the $m=1000$ nodes partitioning $[-6,6]$ into subintervals of equal length.
In Figure \ref{fig:SHB1} we plot the corresponding reconstruction errors versus the 
number of iterations; the first row plots the relative mean square errors 
$\EE[\|x_n^\d-x^\dag\|_{L^2}^2/\|x^\dag\|_{L^2}^2]$ which are calculated 
approximately by the average of $100$ independent runs and the second row plots 
$\|x_n^\d - x^\dag\|_{L^2}^2/\|x^\dag\|_{L^2}^2$ for a particular individual run. 
From these plots we can easily observe that the method demonstrates the semi-convergence 
property no matter how small the relative noise level is. Furthermore, the 
semi-convergence occurs earlier and the iterates diverge faster if using a noisy data 
with larger relative noise level. Therefore, in order to produce a good approximate 
solution, it is crucial to terminate the iteration properly. Theorem \ref{SHB:thm4} 
provides a general {\it a priori} criterion for terminating the method and Theorem \ref{SHB.thm}
provides an {\it a priori} stopping rule in case the source condition (\ref{SC}) is known to hold. 

\begin{figure}[htpb]
\centering
\includegraphics[width = 0.32\textwidth]{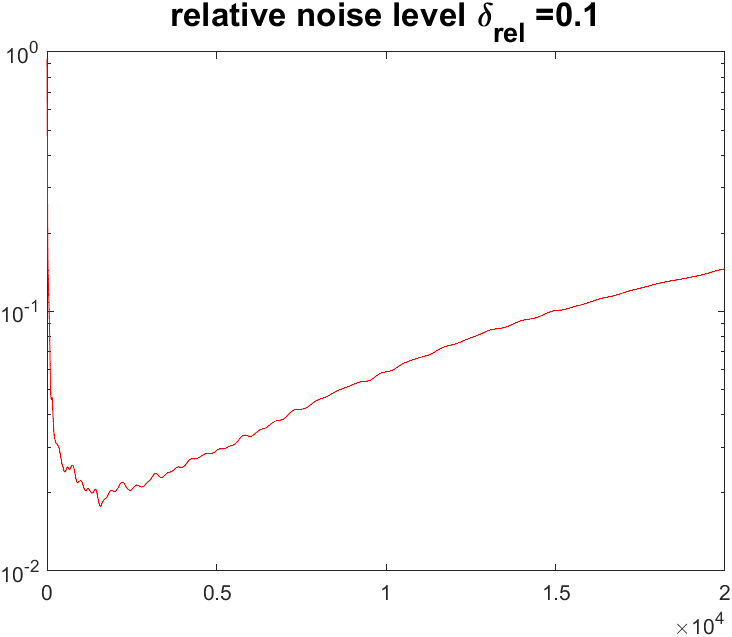}
\includegraphics[width = 0.32\textwidth]{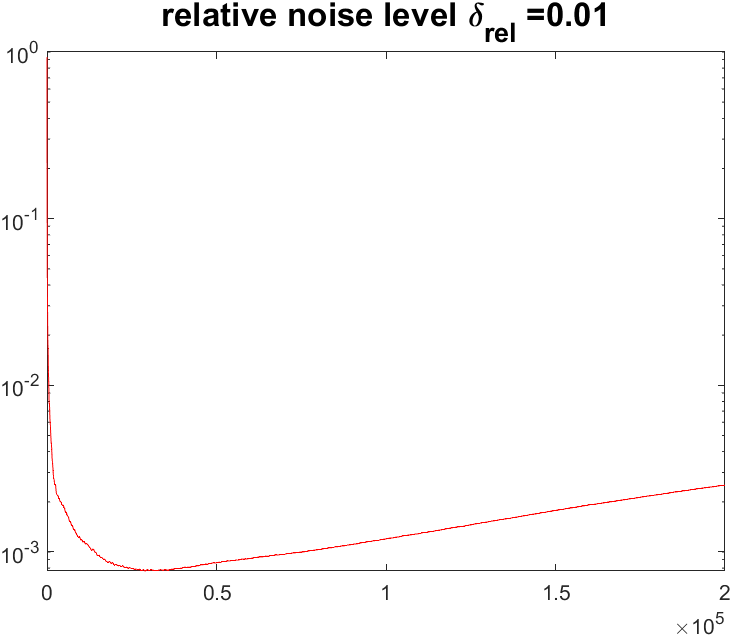}
\includegraphics[width = 0.32\textwidth]{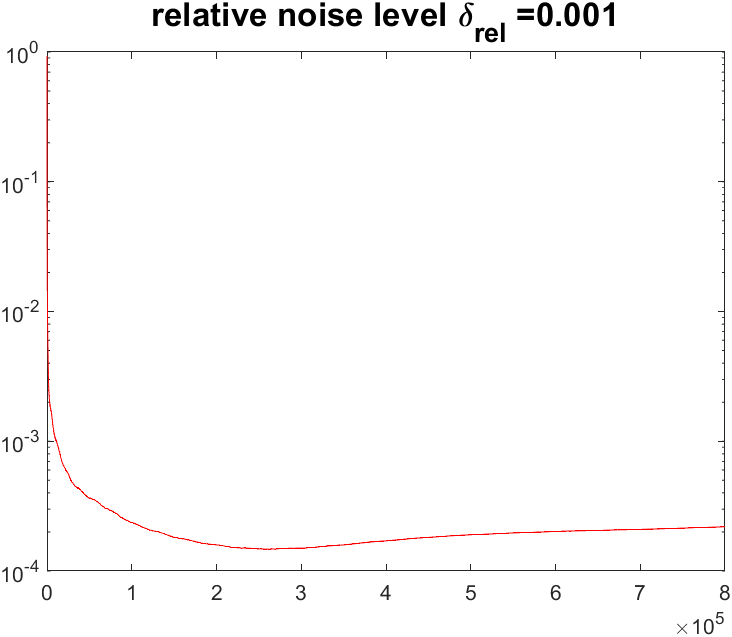}
\includegraphics[width = 0.32\textwidth]{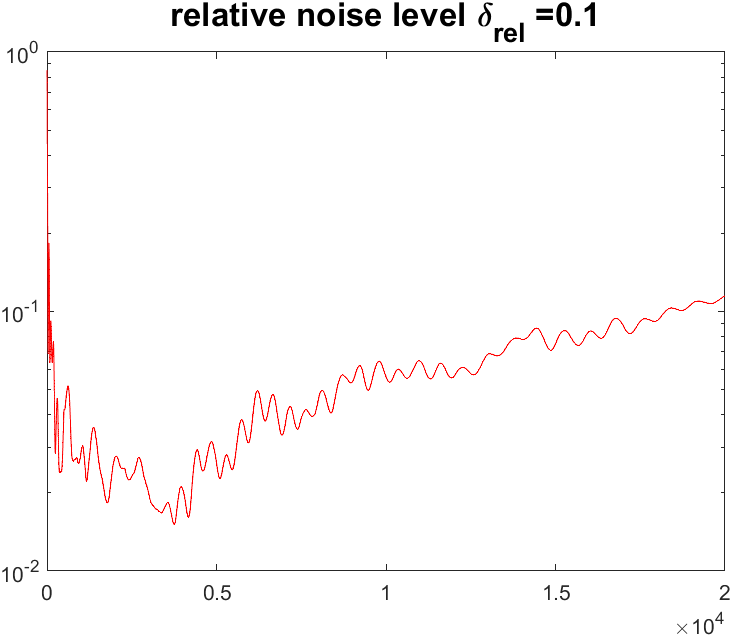}
\includegraphics[width = 0.32\textwidth]{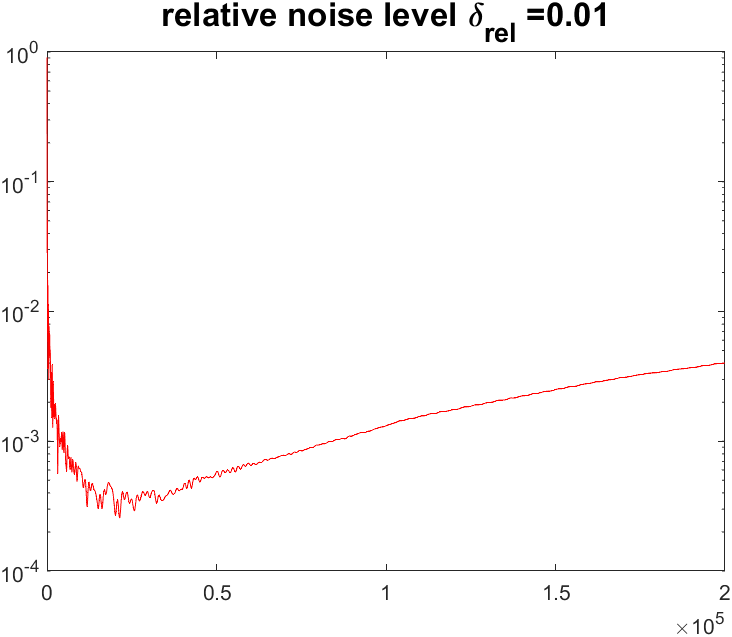}
\includegraphics[width = 0.32\textwidth]{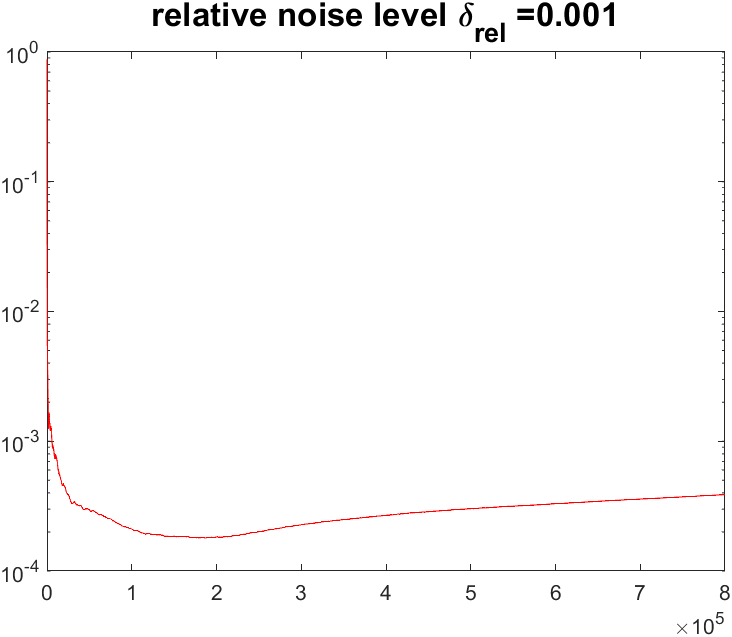}
\caption{Illustration of the semi-convergence of Algorithm \ref{alg:SHB} using noisy data with various relative noise levels} \label{fig:SHB1}
\end{figure}

In order to reduce or even remove the semi-convergence phenomenon, we next consider incorporating 
the discrepancy principle into the choice of $\eta_{i_n}$. Assume the noise level 
$\d_i := \d_{rel} \|y\|_\infty$, $i = 1, \cdots, p$, are known, we use the step size 
given by \eqref{DP}. To illustrate the advantage of incorporating the discrepancy principle, 
we execute Algorithm \ref{alg:SHB} under the two different choices of $\eta_{i_n}$ given 
by \eqref{noDP} and \eqref{DP} with $\mu_0=0.6$ and $\tau=1.4$. In Figure \ref{fig:SHB2} we 
plot the reconstruction errors generated by Algorithm \ref{alg:SHB} using noisy data for three 
different relative noise levels $\delta_{rel} = 10^{-1}, 10^{-2}$ and $10^{-3}$, where 
``\texttt{SHB}'' and ``\texttt{SHB-DP}'' stand for the results corresponding to the 
$\eta_{i_n}$ chosen by \eqref{noDP} and \eqref{DP} respectively. The averaged relative 
error $\|x_n^{\delta} - x^\dag\|_{L^2}^2/\|x^\dag\|_{L^2}^2$ for $100$ independent runs, 
which are used as approximations of $\EE[\|x_n^{\delta} - x^\dag\|_{L^2}^2/\|x^\dag\|_{L^2}^2]$,
are plotted in the first row and the relative error for a particular individual run is 
presented in the second row. These results indicate that using $\eta_{i_n}$ chosen by \eqref{DP} 
can efficiently reduce the effect of semi-convergence which is in particular striking when 
noisy data with large noise level are used.

\begin{figure}[htpb]
\centering
\includegraphics[width = 0.32\textwidth]{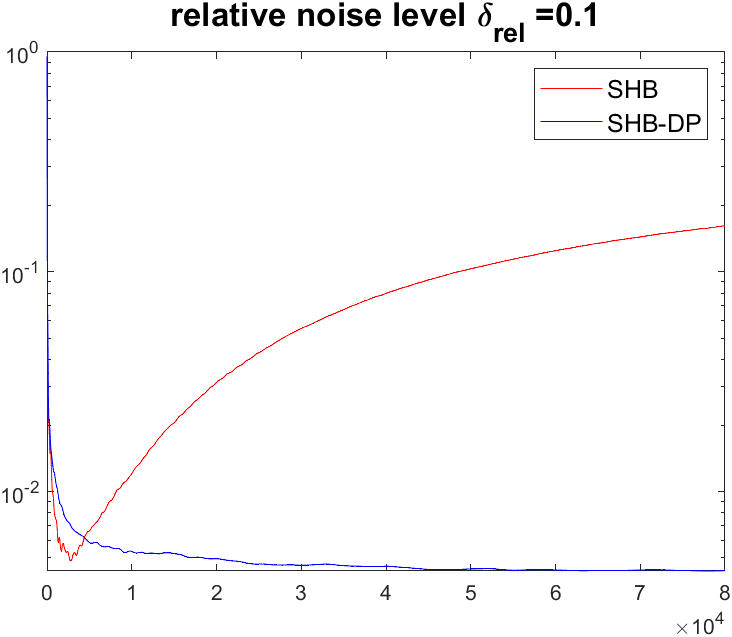}
\includegraphics[width = 0.32\textwidth]{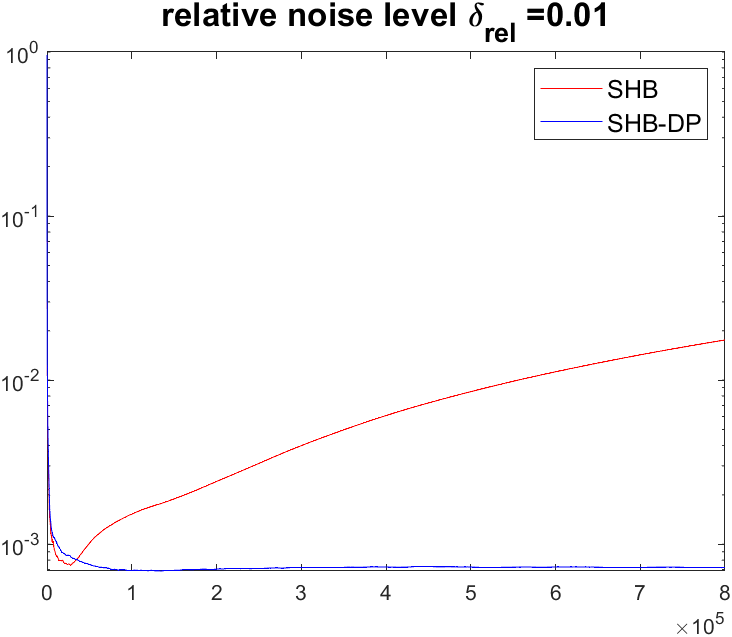}
\includegraphics[width = 0.32\textwidth]{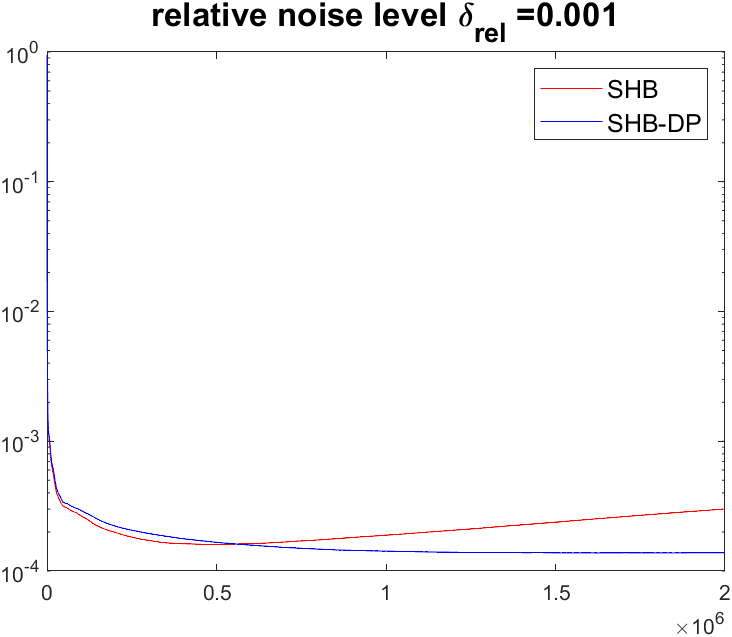}
\includegraphics[width = 0.32\textwidth]{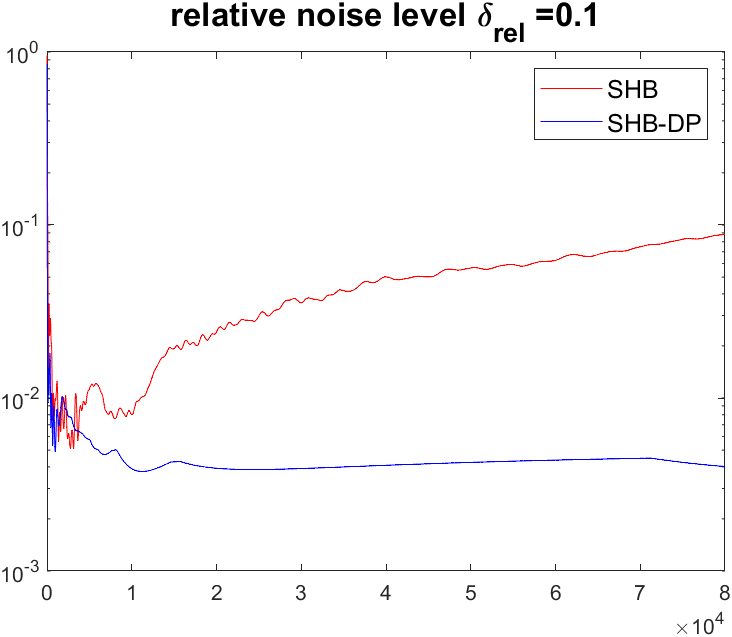}
\includegraphics[width = 0.32\textwidth]{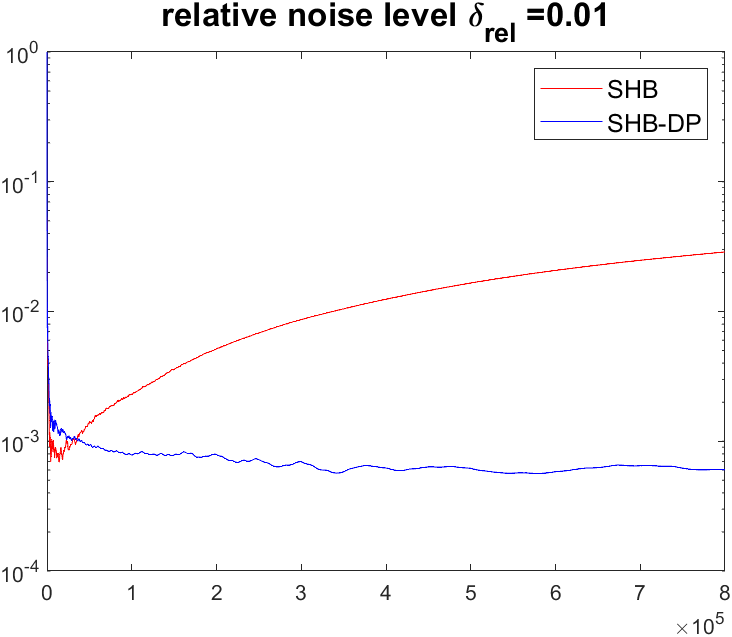}
\includegraphics[width = 0.32\textwidth]{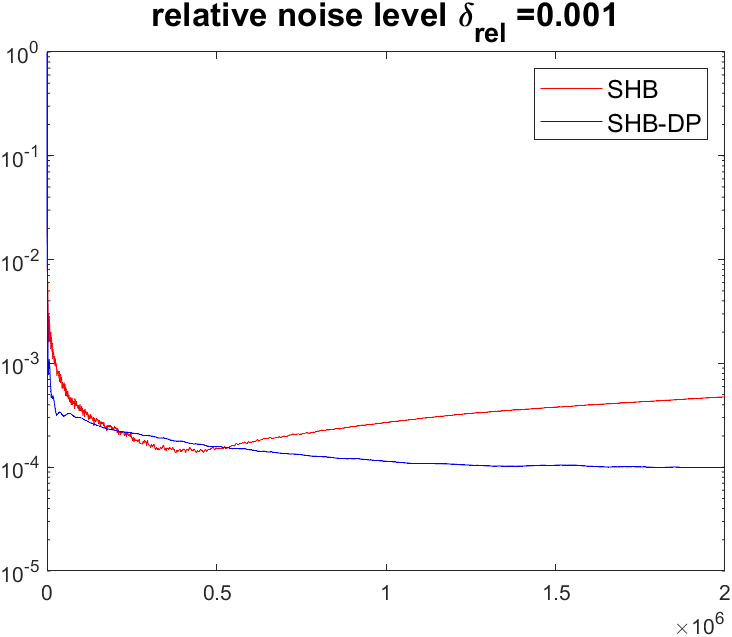}
\caption{Illustration of the effect for $\eta_{i_n}$ chosen by (\ref{DP}) which incorporates 
the discrepancy principle.} \label{fig:SHB2}
\end{figure}

It is interesting to compare the performance of Algorithm \ref{alg:SHB} with the stochastic gradient method
\begin{align}\label{SGD}
x_{n+1}^\d = x_n^\d - \eta_{i_n} A_{i_n}^*(A_{i_n} x_n^\d - y_{i_n}^\d)
\end{align}
which has been studied recently in \cite{JL2019,JLZ2023}, where $i_n\in \{1, \cdots, p\}$ is 
randomly selected via the uniform distribution. This method is a special case of (\ref{SHBM}) 
with $\a_n = 1$ and $\beta_n = 0$. We execute both Algorithm \ref{alg:SHB} and (\ref{SGD}) 
with the same $\eta_{i_n} = 0.6/\|A_{i_n}\|^2$. In the first row of Figure \ref{fig:SHB3} we 
plot the average of relative errors for $100$ independent runs and in the second row we plot 
the relative error for an individual run. From these plots, we can observe that the both
methods have comparable performance. Furthermore, although the both method exhibit semi-convergence, 
the appearance of the momentum term enables Algorithm \ref{alg:SHB} to produce less oscillatory 
iterative sequence and helps slow down the divergence of iterates.

\begin{figure}[htpb]
\centering
\includegraphics[width = 0.32\textwidth]{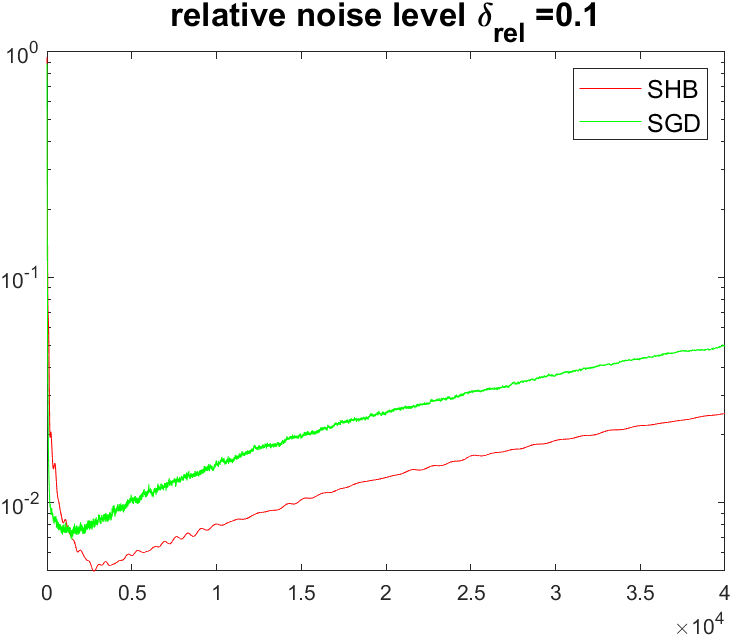}
\includegraphics[width = 0.32\textwidth]{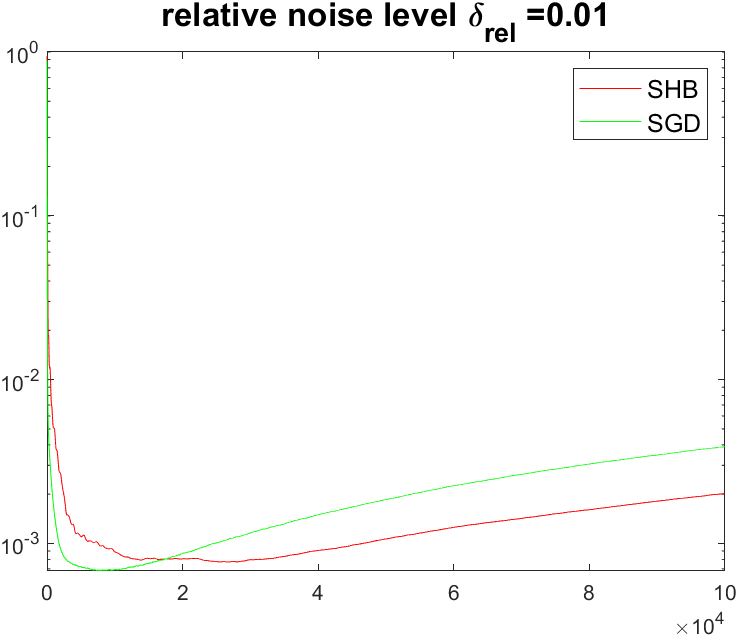}
\includegraphics[width = 0.32\textwidth]{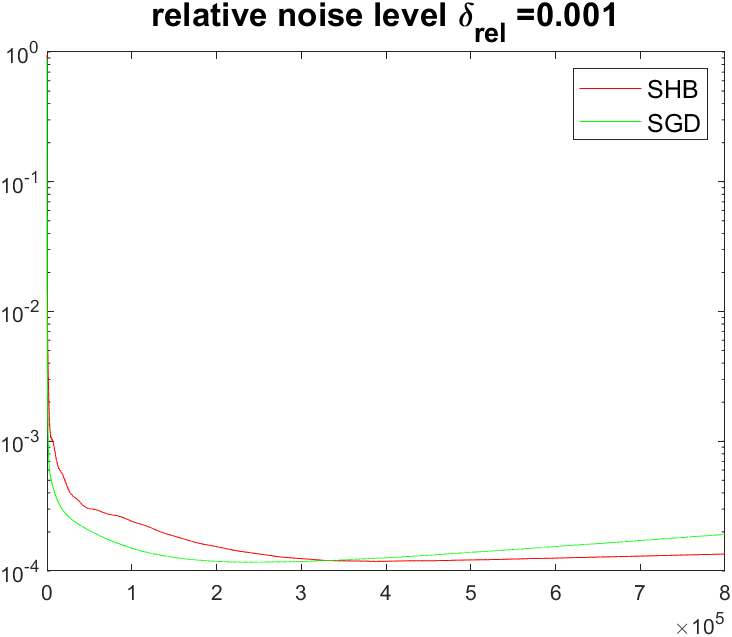}
\includegraphics[width = 0.32\textwidth]{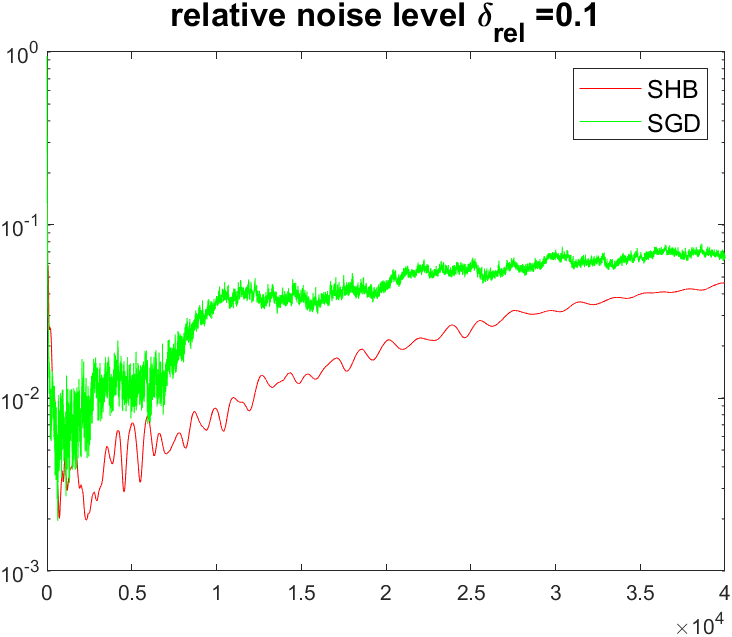}
\includegraphics[width = 0.32\textwidth]{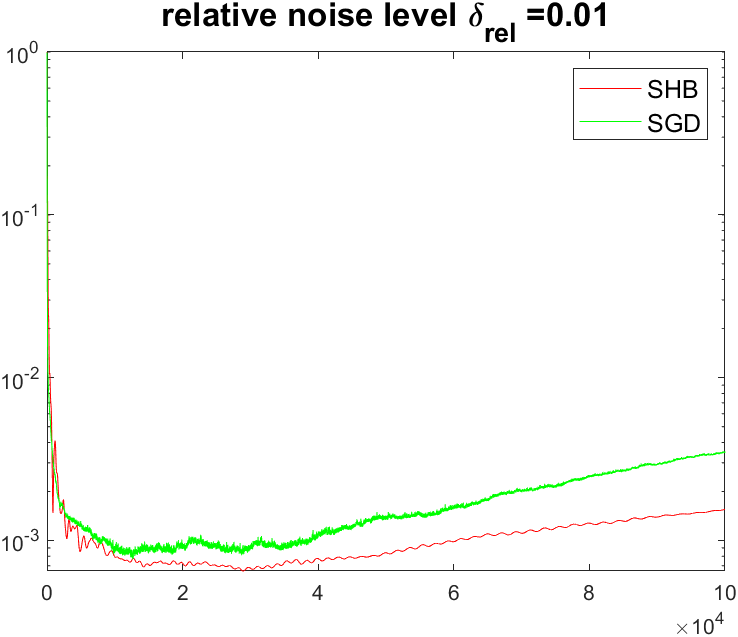}
\includegraphics[width = 0.32\textwidth]{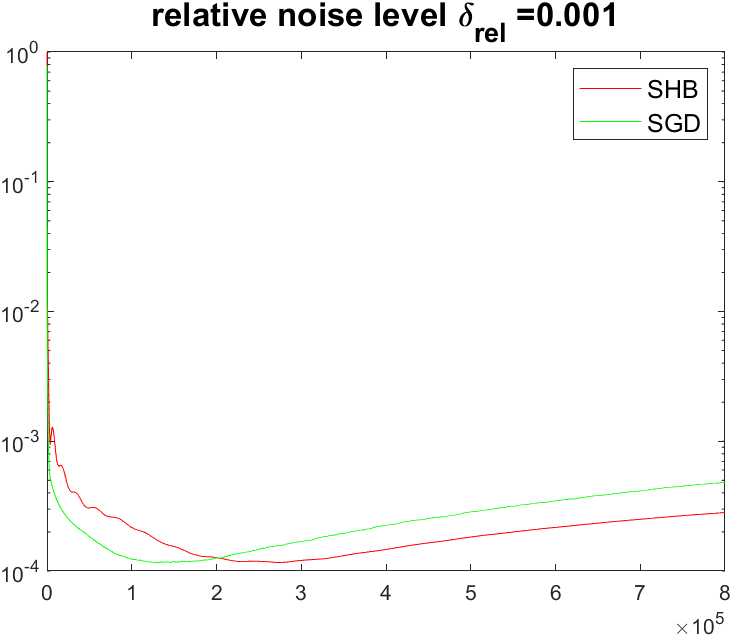}
\caption{Comparison between Algorithm \ref{alg:SHB} and the stochastic gradient descent 
method (\ref{SGD}).} \label{fig:SHB3}
\end{figure}

\end{example}

\begin{example}
We consider again the linear system (\ref{IES}) and assume the sought solution $x^\dag$ is a 
probability density function, i.e. $x^\dag \ge 0$ a.e. on $[a,b]$ and $\int_a^b x^\dag = 1$. 
Since the kernel function $\kappa$ satisfies $\kappa \in C([a,b]\times [a,b])$, each $A_i$ is 
a bounded linear operator from $L^1[a, b]$ to $\RR$. We determine such a solution by considering 
(\ref{SHB.B1}) with
\begin{equation}\label{entropy}
{\mathcal R}(x) := f(x) + \iota_{\Delta}(x),
\end{equation}
where $f$ is the negative of the Boltzmann-Shannon entropy, i.e.,
\begin{equation*}
f(x) = \begin{cases}
\int_a^b x(t)\log x(t) \; \mathrm{d}t & \textnormal{if } x \in L_{+}^1[a,b] 
\mbox{ and } x\log x \in L^1[a,b],\\
+\infty & \textnormal{otherwise,}
\end{cases}
\end{equation*}
and $\iota_{\Delta}$ denotes the indicator function of 
\begin{equation*}
\Delta := \left\{x \in L_{+}^1[a,b]: \int_a^b x(t)\;\mathrm{d}t = 1\right\}.
\end{equation*}
Here $L_{+}^{1}[a,b] := \left\{x \in L^1[a,b]: x \ge 0 \textnormal { a.e. on } [a,b]\right\}$. 
According to \cite{B1991,E1993,Jin2022}, $\mathcal{R}$ is proper, lower semi-continuous and 
$\mu$-strongly convex with $\mu = 1/2$. Furthermore, one can show that the updating formula for 
$x_n^\d$ in Algorithm \ref{alg:SHB2} is given by (see \cite{JLZ2023})
\begin{equation*}
x_n^\d = \nabla \mathcal{R}^{*}(\xi_n^\d) 
= \arg\min_{x \in L^{1}[a,b]}\left\{\R(x) - \l \xi_n^\d, x\r\right\} 
= \frac{e^{\xi_n^\d}}{\int_a^b e^{\xi_n^\d(t)}\;\mathrm{d}t}.
\end{equation*}

\begin{figure}[htpb]
\centering
\includegraphics[width = 0.32\textwidth]{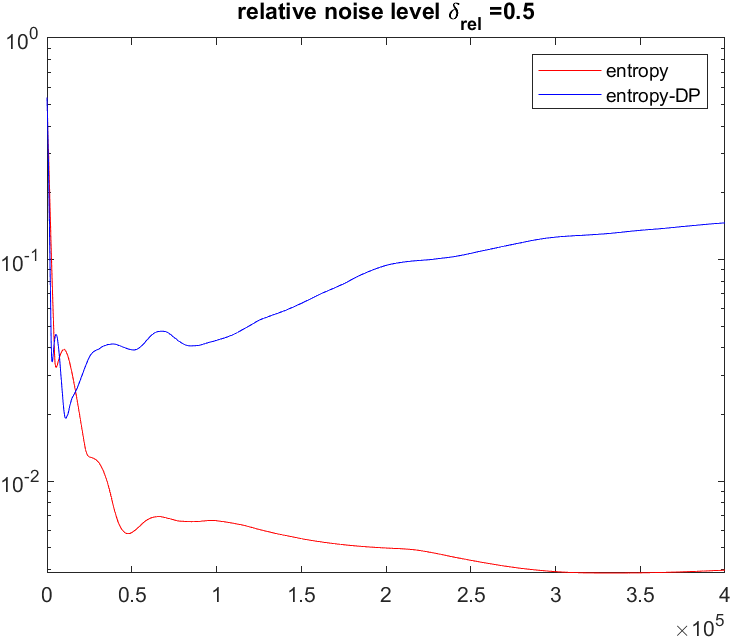}
\includegraphics[width = 0.32\textwidth]{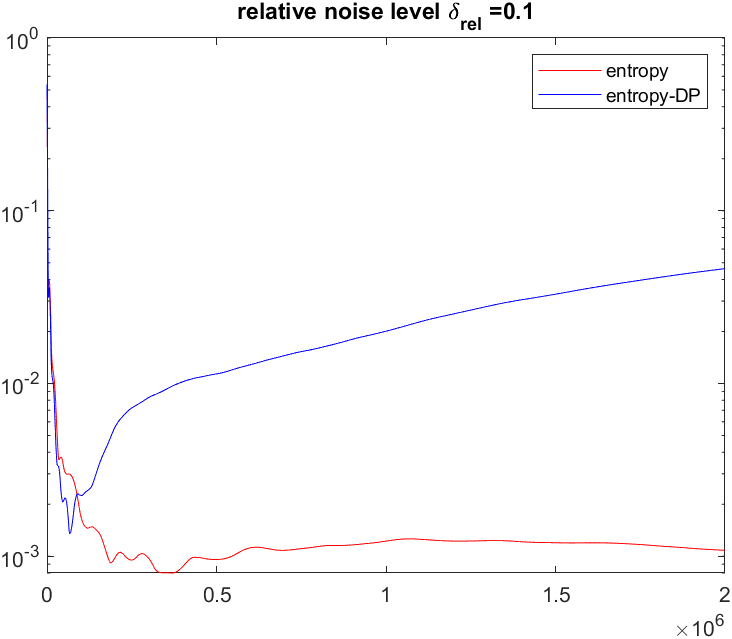}
\includegraphics[width = 0.32\textwidth]{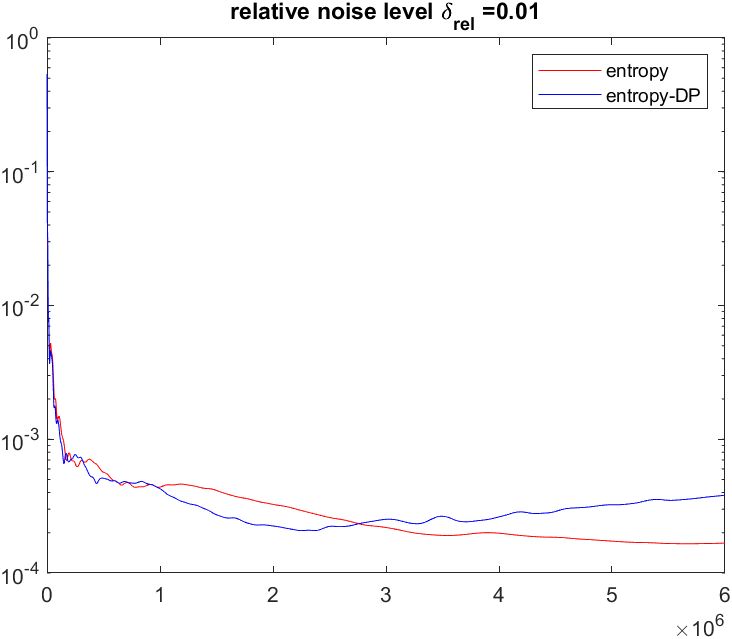}
\caption{Relative reconstruction error by Algorithm \ref{alg:SHB2}. \texttt{entropy}: 
$\eta_{i_n} = 0.98/\|A\|^2$; \texttt{entropy-DP}: $\eta_{i_n}$ is chosen by (\ref{DP}
with $\mu_0 = 0.98$.} \label{fig:SHB4}
\end{figure}    

For numerical simulations we consider the linear system (\ref{IES}) 
with $[a, b] = [0,1]$, $p = 1000$ and $\kappa(s, t) := 4 e^{-(s-t)^2/0.0064}$. We 
assume the sought solution is 
$$
x^\dag(t):= c \left(e^{-60(t-0.3)^2} + 0.3 e^{-40(t-0.8)^2}\right),
$$
where $c>0$ is a constant to ensure that $\int_0^1 x^\dag (t) dt = 1$ so that $x^\dag$ 
is a probability density function. Instead of the exact data $y_i:=A_i x^\dag$ we use 
the noisy data $y_i^\d$ of the form (\ref{nd}); we then reconstruct the sought solution $x^\dag$
using these noisy data in Algorithm \ref{alg:SHB2} with $\R$ given by (\ref{entropy}) and 
$\eta_{i_n} = 0.98/\|A\|^2$; the integrals involved in the method are approximated by the 
trapezoidal rule based on the partition of $[0,1]$ into $p-1$ subintervals of equal length. We 
execute the algorithm with noisy data for three distinct relative noise levels
$\delta_{rel} = 0.5$, $0.1$ and $0.01$ and plot in Figure \ref{fig:SHB4} the $L^1$ relative errors 
$\|x_n^\d - x^\dag\|^2_{L^1}/\|x^\dag\|^2_{L^1}$ for an individual run for each noise 
level; see the red plots labelled as ``\texttt{entropy}". These plots demonstrate that 
Algorithm \ref{alg:SHB2} with constant $\eta_{i_n}$ possesses the semi-convergence property, 
i.e., the iterates approach the sought solution at the beginning stage and then diverge after 
a critical number of iterations. 

To reduce the effect of semi-convergence, we next consider Algorithm \ref{alg:SHB2} with 
$\eta_{i_n}$ chosen by (\ref{DP}) with $\tau = 1$ and $\mu_0 = 0.98$ which uses the knowledge of 
$\d_{i_n}:=\d_{rel} \|y\|_\infty$ for $i =1, \cdots, p$ and incorporates the spirit of the 
discrepancy principle. We execute the corresponding algorithm using the same noisy data as 
above. In Figure \ref{fig:SHB4} we plot the $L^1$ 
relative reconstruction error $\|x_n^{\delta} - x^\dag\|_{L^1}^2/\|x^\dag\|_{L^1}^2$, see the blue 
plots labelled as ``\texttt{entropy-DP}". The plots in Figure \ref{fig:SHB4} demonstrate clearly 
that using $\eta_{i_n}$ chosen by \eqref{DP} can significantly relieve the method from 
semi-convergence.
\end{example}

\section{\bf Conclusion}

Heavy ball method was proposed by Polyak in 1964 to accelerate the gradient method by adding a 
momentum term. This method and its stochastic variant have received much attention in optimization
community in recent years due to the development of machine learning and the appearance of large 
scale problems. In this paper we considered a stochastic heavy ball method for solving linear 
ill-posed inverse problems. With suitable choices of the step-sizes and the momentum coefficients, 
we established the regularization property of the method and derived the rate of convergence under 
a benchmark source condition on the sought solution. There are many challenging and interesting 
questions that deserve for further study. 
These include the choices of $\a_n$ and $\beta_n$ and the design of stopping rules. Note that 
in this paper we only considered the method (\ref{SHBM}) with $\a_n$ and $\beta_n$ chosen 
by (\ref{ab}). There must exist other possible choices of $\a_n$ and $\beta_n$ that might 
yield faster convergence. It is also of interest to consider adaptive choices of $\a_n$ 
and $\beta_n$ to produce fast convergent method. Furthermore, in our paper, we considered 
terminating the method (\ref{SHBM}) by {\it a priori} stopping rules which usually require 
extra information, such as source information, of the sought solution. These extra information 
are hardly available in practical applications. How to design efficient {\it a posteriori} stopping 
rules for the method (\ref{SHBM}) is therefore significantly important. 




\bibliographystyle{amsplain}

\end{document}